\documentclass{amsart}

\newcommand{\C}{\mathbb{C}}
\newcommand{\D}{\mathbb{D}}
\newcommand{\Z}{\mathbb{Z}}
\newcommand{\R}{\mathbb{R}}
\newcommand{\N}{\mathbb{N}}

\newcommand{\dom}{\operatorname{dom}}

\newcommand{\Log}{\operatorname{Log}}
\newcommand{\Arg}{\operatorname{Arg}}
\renewcommand{\Re}{\operatorname{Re}}
\renewcommand{\Im}{\operatorname{Im}}

\newcommand{\norm}[1]{\| #1 \|}

\newtheorem{theorem}{Theorem}[section]
\newtheorem{lemma}[theorem]{Lemma}

\theoremstyle{definition}
\newtheorem{definition}[theorem]{Definition}

\theoremstyle{theorem}
\newtheorem{corollary}[theorem]{Corollary}

\theoremstyle{theorem}
\newtheorem{proposition}[theorem]{Proposition}

\theoremstyle{theorem}

\theoremstyle{theorem}

\theoremstyle{definition}

\theoremstyle{theorem}

\numberwithin{equation}{section}

\newtheorem*{carleson}{Carleson's Theorem}

\begin{document}
\title{Algorithmic randomness and Fourier analysis}
\author[Franklin]{Johanna N.Y.\ Franklin}
\address[Franklin]{Department of Mathematics \\ Room 306, Roosevelt Hall \\ Hofstra University \\ Hempstead, NY 11549-0114 \\ USA}
\email{johanna.n.franklin@hofstra.edu}
\urladdr{http://people.hofstra.edu/Johanna\_N\_Franklin/}

\author[McNicholl]{Timothy H.\ McNicholl}
\address[McNicholl]{Department of Mathematics\\ Iowa State University\\ Ames, Iowa 50011}
\email{mcnichol@iastate.edu}

\author[Rute]{Jason Rute}
\address[Rute]{Department of Mathematics\\
	Pennsylvania State University\\
	University Park, PA 16802}
\email{jmr71@math.psu.edu}
\urladdr{http://www.personal.psu.edu/jmr71/}

\begin{abstract}
Suppose $1 < p < \infty$.  
Carleson's Theorem states that the Fourier series of any function in $L^p[-\pi, \pi]$ converges almost everywhere.   We show that the Schnorr random points are precisely those that satisfy this theorem for every $f \in L^p[-\pi, \pi]$ given natural computability conditions on $f$ and $p$.
\end{abstract}
\maketitle

\section{Introduction}\label{sec:intro}

Recent discoveries have shown that algorithmic randomness has a very natural connection with classical analysis. Many theorems in analysis have the form ``For almost every $x$, $\ldots$"; the set of points for which the central claim of the theorem fails for a given choice of parameters is called an \emph{exceptional set} of the theorem.  For example, one of Lebesgue's differentiation theorems states that if $f$ is a monotone function on $[0,1]$, then $f$ is differentiable almost everywhere.  In this case, for each monotone function $f$ on $[0,1]$, the set of points at which $f$ is not differentiable is an exceptional set.  On the other hand, every natural randomness notion is characterized by a conull class of points. This suggests it is possible to characterize the points that satisfy a particular theorem in analysis in terms of a randomness notion.  Put another way, it may be the case that exceptional sets of a theorem can be used to characterize a standard notion of randomness.

To date, results of this nature have been discovered in ergodic theory \cite{bdhms12,fgmn,ft-mp,ghr11,hoyrup13,mnz,v97}, differentiability \cite{bhmn-14,bmn16,Freer.Kjos-Hanssen.Nies.ea:2014,Hoyrup.Rojas:2009b,mnz,nies14,Pathak.Rojas.Simpson:2014}, Brownian motion \cite{abs14,Asarin.Pokrovskii:1986,Fouche:2000}, and other topics in analysis \cite{av13,cf-ud,Rute:2013pd}. In this paper, we add Fourier series to this list by considering Carleson's Theorem. The original version of this theorem was proven in 1966 by L. Carleson for $L^2$ functions \cite{carleson66}; we will consider an extension of this theorem to $L^p$ functions for $p>1$ that is due to Hunt but still generally referred to as Carleson's Theorem \cite{hunt68}.  Throughout this paper we only consider the complex version of $L^p[-\pi, \pi]$; that is, we work in the space of all measurable 
$f : [-\pi, \pi] \rightarrow \C$ so that $\int_{-\pi}^\pi |f(t)|^p\ dt < \infty$.

\begin{carleson}
Suppose $1 < p < \infty$.  
 If $f$ is a function in $L^p[-\pi, \pi]$, then the Fourier series of $f$ converges to $f$ almost everywhere.
\end{carleson}

Suppose $1 < p < \infty$.  It is well known that the Fourier series of any $f \in L^p[-\pi, \pi]$ 
converges to $f$ in the $L^p$-norm.  It follows that if the Fourier series of $f \in L^p[-\pi, \pi]$ converges almost everywhere, then it converges to $f$ almost everywhere.

We consider Carleson's Theorem in the context of computable analysis and demonstrate the points that satisfy this theorem are precisely the Schnorr random points via the following two theorems.

\begin{theorem}\label{thm:main.1}
Suppose $p > 1$ is a computable real.  
If $t_0 \in [-\pi, \pi]$ is Schnorr random and $f$ is a computable vector in $L^p[-\pi, \pi]$, then the Fourier series for $f$ converges at $t_0$. 
\end{theorem}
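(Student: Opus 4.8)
The plan is to run the classical ``maximal inequality plus density'' proof of almost-everywhere convergence of Fourier series (the effective Banach principle), but to track enough computability that the exceptional set it produces is covered by a genuine Schnorr test, so that no Schnorr random $t_0$ can lie in it. Throughout, write $S_N f(t) = \sum_{|n|\le N}\hat f(n)\,e^{\mathrm{i}nt}$ for the $N$-th partial sum of the Fourier series of $f$, and let $C^\ast f(t)=\sup_N|S_Nf(t)|$ be the associated Carleson--Hunt maximal function.

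I would assemble three ingredients. First, the Carleson--Hunt maximal inequality $\norm{C^\ast g}_p\le A_p\norm{g}_p$ for $g\in L^p[-\pi,\pi]$, together with the fact (to be verified as a lemma, by inspecting Hunt's proof or a later proof) that a value of $A_p$ can be computed uniformly from a name for the computable real $p>1$. Second, since $f$ is a computable vector in $L^p[-\pi,\pi]$ and the trigonometric polynomials with Gaussian-rational coefficients form a computable dense sequence there, one can compute a sequence of trigonometric polynomials $P_k$ with $\norm{f-P_k}_p\le 2^{-k}$. Third, because $f$ is in particular a computable $L^1$ vector, the Fourier coefficients $\hat f(n)$ are computable uniformly in $n$; hence $(k,N,t)\mapsto S_N(f-P_k)(t)=S_Nf(t)-S_NP_k(t)$ is computable, and therefore $C^\ast(f-P_k)$ is lower semicomputable uniformly in $k$.

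Set $g_k=f-P_k$. The elementary reduction is that for $N,M$ exceeding the (computable) degree of $P_k$ one has $S_NP_k=S_MP_k=P_k$, so $|S_Nf(t_0)-S_Mf(t_0)|=|S_Ng_k(t_0)-S_Mg_k(t_0)|\le 2\,C^\ast g_k(t_0)$; hence $(S_Nf(t_0))_N$ is Cauchy, and so converges, as soon as $C^\ast g_k(t_0)\to 0$. By the maximal inequality and Chebyshev's inequality, $U_k=\{\,t: C^\ast g_k(t)>2^{-k/2}\,\}$ is (uniformly) effectively open with $\mu(U_k)\le A_p^{\,p}\,2^{-kp/2}$, a summable computable bound, so the sets $V_m=\bigcup_{k\ge m}U_k$ are uniformly effectively open with $\mu(V_m)\to 0$ effectively. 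Any $t_0\notin\bigcap_mV_m$ then satisfies $C^\ast g_k(t_0)\le 2^{-k/2}\to 0$, and we are done provided $(V_m)$ is a legitimate Schnorr test.

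The main obstacle is exactly this last point: a Schnorr test requires the measures $\mu(V_m)$ to be computable (uniformly), whereas the superlevel sets of a lower semicomputable function such as $C^\ast g_k$ a priori carry only lower semicomputable measures. I would handle this in one of the standard ways: either perturb the thresholds $2^{-k/2}$ slightly --- using that $\lambda\mapsto\mu\{\,C^\ast g_k>\lambda\,\}$ is monotone, hence continuous off a countable set, to select levels at which the measure is computable --- while keeping a comparable summable computable bound and the uniformity; or invoke a ``computable-measure'' Solovay-type characterization of Schnorr randomness, using that $\sum_k\mu(U_k)$ is bounded by the computable number $A_p^{\,p}/(1-2^{-p/2})$ and that a lower semicomputable, $L^1$-bounded series whose total integral is computable has uniformly computable terms. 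The secondary obstacle is the effectivity of $A_p$, which I expect to be available but would isolate as a preliminary lemma. Everything else is bookkeeping: the reduction, the Chebyshev estimate, and the effective Borel--Cantelli passage from $(U_k)$ to the test $(V_m)$.
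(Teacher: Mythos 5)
Your overall strategy is the same as the paper's (approximate $f$ by rational trigonometric polynomials, exploit that partial sums of $f$ and of $f-P_k$ have the same oscillation beyond $\deg P_k$, apply a Carleson-type maximal inequality plus Chebyshev, and package the exceptional sets into a test), and you have correctly located the crux: the superlevel sets $U_k=\{C^\ast g_k>2^{-k/2}\}$ are effectively open, but since $C^\ast g_k=\sup_N|S_Ng_k|$ is only \emph{lower} semicomputable, their measures are only left-c.e.\ with a computable upper bound. As it stands your $(V_m)$ is a Martin-L\"of test, which proves convergence at Martin-L\"of random $t_0$ but not at all Schnorr random $t_0$. Neither of your proposed repairs closes this gap. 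For fix (a), choosing thresholds where the distribution function $\lambda\mapsto\mu\{C^\ast g_k>\lambda\}$ is continuous does not help: the standard threshold-perturbation trick needs $\mu\{h\ge\lambda\}$ to be \emph{upper} semicomputable, which holds when $h$ itself is computable (two-sided), but $C^\ast g_k$ is an infinite sup of computable functions and provides no source of upper bounds on the measure at any threshold. For fix (b), the Solovay-type characterization of Schnorr randomness (equivalently Miyabe's integral-test characterization applied to $\sum_k\chi_{U_k}$) requires $\sum_k\mu(U_k)$ to be a \emph{computable} real; you only have that it is left-c.e.\ and bounded by the computable number $A_p^{\,p}/(1-2^{-p/2})$, and a bound is not a value. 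The lemma you invoke (uniformly left-c.e.\ nonnegative terms with computable sum are uniformly computable) is true, but its hypothesis is exactly what is missing, so the argument is circular. (A side remark: your worry about computing $A_p$ uniformly from $p$ is unnecessary; the theorem is non-uniform in $p$, so any integer bound on the constant may be hard-coded, as the paper does with its constant $C$.)

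The missing idea, which is how the paper proceeds, is to avoid the full maximal function altogether. First, Fefferman's inequality applied to $f-\tau_{k,m}$ with $\norm{f-\tau_{k,m}}_p\le 2^{-(m+k+3)}C^{-1}$ yields a \emph{computable modulus of almost-everywhere convergence} $\eta$ for $\{S_N(f)\}_{N\in\N}$ (the paper's Lemma \ref{lm:modulus.ae.conv}); this is exactly your reduction plus Chebyshev, but recorded as a quantitative statement about oscillation beyond $\eta(k,m)$ rather than as a family of open sets. Second (Lemma \ref{lm:effective-convergence}), one builds a Schnorr integral test from maxima over \emph{finite blocks}: with $N_k=\eta(k,k)$, the functions $g_k(t)=\min\{1,\max\{|S_M(f)(t)-S_N(f)(t)|:N_k<M,N\le N_{k+1}\}\}$ are genuinely computable (finite maxima of computable functions), so $\int g_k\,d\mu$ is computable, and the modulus gives $\int g_k\,d\mu\lesssim 2^{-k}$, whence $T=\sum_k g_k$ has computable integral and is a Schnorr integral test; divergence of the Fourier series at $t_0$ forces $T(t_0)=\infty$ via a telescoping estimate. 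The finite-block truncation is precisely the device that converts ``lower semicomputable with bounded integral'' into ``computable with computable integral,'' and without it (or some equivalent idea) your construction only certifies Martin-L\"of randomness of the divergence points, which falls short of the theorem.
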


\begin{theorem}\label{thm:main.2}
If $t_0 \in [-\pi, \pi]$ is not Schnorr random, then there is a computable function $f : [-\pi, \pi] \rightarrow \C$ whose Fourier series diverges at $t_0$.	
\end{theorem}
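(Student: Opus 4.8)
The plan is to effectivize the classical constructions of continuous functions with divergent Fourier series, reading off the needed uniformity from a test witnessing that $t_0$ is not Schnorr random. Fix such a test $(U_n)_n$: the $U_n$ are uniformly effectively open, $\mu(U_n) \le 2^{-n}$, the map $n \mapsto \mu(U_n)$ is computable, and $t_0 \in \bigcap_n U_n$; replacing $U_n$ by $\bigcap_{m \le n} U_m$ we may take the sequence decreasing. The point of the measures being computable is that it lets us approximate each $U_n$ effectively from the inside: for all $n$ and $k$ one can compute a finite union $V_{n,k}$ of rational open intervals with $V_{n,k} \subseteq U_n$ and $\mu(U_n \setminus V_{n,k}) < 2^{-k}$. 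Since the construction may refer to the test but not to $t_0$ itself, $f$ will in fact be built so that its Fourier partial sums are unbounded at \emph{every} point of $\bigcap_n U_n$, which then includes $t_0$.

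The building block is the Fej\'er--du~Bois-Reymond polynomial, localized to a rational interval. Given a rational open interval $I$ of length $\ell$ with rational midpoint $\tau$ and an integer $m \ge \lfloor 1/\ell \rfloor$, set
\[
 g_{I,m}(t) \;=\; e^{im(t-\tau)} \cdot 2i \sum_{k=1}^{\lfloor 1/\ell \rfloor} \frac{\sin\!\big(k(t-\tau)\big)}{k}.
\]
Then: $\| g_{I,m} \|_\infty$ is bounded by an absolute constant, since the partial sums of $\sum_k \frac{\sin kx}{k}$ are uniformly bounded; the Fourier support of $g_{I,m}$ is contained in the window $\{m - \lfloor 1/\ell \rfloor, \dots, m + \lfloor 1/\ell \rfloor\}$, which is slid arbitrarily far to the right by increasing $m$; and a short computation shows $|S_m(g_{I,m})(t)| \ge c \log(1/\ell)$ for every $t \in I$, with $c$ an absolute constant. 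All of this is uniformly computable from $I$ and $m$. So each rational interval yields a computable trigonometric polynomial of bounded sup-norm, with prescribable high-frequency support, whose $m$-th Fourier partial sum is at least $c\log(1/\ell)$ on the whole of $I$.

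The assembly is where the real work lies. The naive scheme — one building block per generating interval of each $U_n$, placed in disjoint frequency windows with summable coefficients — does not work: since a non-random $t_0$ may lie in intervals of every scale and no particular one can be identified computably, one is forced to make the blow-up uniform over \emph{all} intervals at a given level, and then the coefficient one can afford per interval is too small to survive once it is weighed against the bounded but nonzero contributions of the lower-frequency blocks. This is exactly the obstruction that Kahane--Katznelson overcome for an arbitrary null set, via a randomized multi-scale superposition: at each dyadic scale one covers the relevant part of $U_n$ by its maximal dyadic subintervals, puts a building block on each with a random sign $\pm 1$, and invokes Salem--Zygmund-type sup-norm bounds for random trigonometric polynomials so that the sup-norms accumulate like a square root rather than linearly while the targeted partial sum at a point of a subinterval still dominates the cross terms; iterating over scales and over $n$, with rapidly growing windows and rapidly shrinking coefficients, yields $f$. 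The main obstacle is to carry this out effectively: (i) work throughout with the computable finite approximations $V_{n,k}$ in place of the $U_n$ and their dyadic decompositions, absorbing the tails into an error budget using the computability of $\mu(U_n)$; (ii) replace the random signs by an explicit computable choice that works, obtained by a finite search making the underlying union bound effective at each stage; and (iii) choose windows and coefficients so that the coefficient-weighted sup-norms converge effectively — making $f$ a computable point of $C[-\pi,\pi]$ — while, along an explicit computable sequence $N_1 < N_2 < \cdots$, the partial sums $S_{N_j}(f)$ are unbounded at every point of $\bigcap_n U_n$. Granting $f$, the conclusion is immediate: $t_0 \in \bigcap_n U_n$, so $|S_{N_j}(f)(t_0)| \to \infty$ and the Fourier series of $f$ diverges at $t_0$.
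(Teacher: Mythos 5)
Your outline gets the frame right---inner-approximating the test by finite unions of rational intervals, forcing blow-up on the \emph{whole} approximating set (since no single interval containing $t_0$ can be identified), pushing the blocks into disjoint high-frequency windows, and concluding divergence at every point of $\bigcap_n U_n$---and this matches the shape of the paper's assembly. But the proof has a genuine gap exactly where you place the ``main obstacle'': the theorem's analytic core is a Kahane--Katznelson-type lemma (the paper's Lemma \ref{lm:poly.2}) producing, uniformly from a finite union of rational intervals $G$ with $\lambda(G)$ small and computable, a single computable trigonometric polynomial $p$ with $\norm{p}_\infty < 1$ and $\sup_N|S_N(p)(t)| \geq c\log(2\pi/\lambda(G))$ for \emph{every} $t \in G$, with the blow-up depending only on the total measure of $G$ and not on the number or lengths of its constituent intervals. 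You correctly observe that the per-interval Fej\'er blocks cannot simply be superposed, but you then defer the replacement to ``a randomized multi-scale superposition with Salem--Zygmund bounds, made effective by finite search,'' listing (i)--(iii) as work still to be done. That is the theorem; without it nothing is proved.

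Moreover, the route you sketch is quantitatively doubtful, not just unfinished: at dyadic scale $2^{-s}$ a set of measure $\mu$ may require $M_s \approx \mu 2^{s}$ intervals, so even square-root cancellation gives a sup-norm of order $\sqrt{M_s}$, which overwhelms the logarithmic gain $s$ per interval after normalization; random signs do not by themselves recover a bound of the form $\log(1/\lambda(G))$. The paper (following Kahane--Katznelson) gets this bound deterministically by attaching one analytic function to the whole union: it takes the analytic extension $\hat\omega(\cdot, F, \D)$ of the harmonic measure of a slightly enlarged union of arcs $F \supseteq G$, applies $\Log$, evaluates on a computable radius $r_0 < 1$ and truncates the Taylor series (Lemmas \ref{lm:analytic} and \ref{lm:poly.1}), and converts the real-part lower bound $\tfrac12\log(2\pi/\lambda(G))$ into a partial-sum lower bound by multiplying the imaginary part by $e_{-N}$. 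Finally, the assembly needs a precise quantitative balance your item (iii) does not pin down: the paper uses $U_{2^n}$ and arranges $\mu(G_{n,k}) < 2^{-2^{n+k}}$, so that the logarithmic blow-up $\gtrsim 2^{n+k}$ survives multiplication by the coefficients $2^{-(n+k+1)}$ needed to make $f$ a computable element of $C[-\pi,\pi]$, leaving a fixed oscillation $(8\pi)^{-1}$ in arbitrarily high frequency windows. Supplying the key lemma (in effective form, uniformly in $G$) and this doubly-exponential calibration is what your proposal is missing.
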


It is well known that when $p \geq 1$ is a computable real, there are incomputable functions in $L^p[-\pi, \pi]$ that are nevertheless computable as vectors, e.g., step functions.  Thus, Theorem \ref{thm:main.2} is considerably stronger than the converse of Theorem \ref{thm:main.1}. 
To the best of our knowledge, Theorems \ref{thm:main.1} and \ref{thm:main.2} yield the first characterization of a randomness notion via a theorem of Fourier analysis.  The proofs reveal some interesting and sometimes surprising connections between topics from algorithmic randomness such as Schnorr integral tests and topics from classical analysis such as analytic and harmonic function theory.

The paper is organized as follows.  In Section \ref{sec:bg}, we present the necessary background.     Sections \ref{sec:1stproof} and \ref{sec:2ndproof} contain the proofs of Theorems \ref{thm:main.1} and \ref{thm:main.2}, respectively.  In Sections \ref{sec:conv.to.ft0} and \ref{sec:Fejer.Lebesgue} we give two variations of Theorem \ref{thm:main.1}.  The first variation characterizes the values to which the Fourier series converges.  The second variation addresses the Fej\'er-Lebesgue Theorem which is similar to Carleson's Theorem, but also applies to the $L^1$ case.   Section \ref{sec:conclusion} contains a broader analysis of our results.

\section{Background and preliminaries}\label{sec:bg}

We begin with the necessary topics from analysis and then discuss computable analysis and algorithmic randomness.  We assume the reader is familiar with classical computability in discrete settings as expounded in \cite{Cooper.2004,o1,o2,soare}.  

\subsection{Fourier analysis}

We begin with some notation. For all $n \in \Z$ and $t \in [-\pi, \pi]$, let $e_n(t) =
e^{in t}$.
For all $n \in \Z$ and $f \in L^1[-\pi, \pi]$, let 
\[
c_n(f) = \frac{1}{2\pi} \int_{-\pi}^\pi f(t) e^{i n t} dt.
\] 
For all $f \in L^1[-\pi, \pi]$ and all $N \in \N$, let
\[
S_N(f) = \sum_{n = -N}^N c_n(f) e_n.
\]
That is, $S_N(f)$ is the $(N+1)^{st}$ partial sum of the Fourier series of $f$.  
We say that $f \in L^1[-\pi, \pi]$ is \emph{analytic} if $c_n(f) = 0$ whenever $n < 0$.

C. Fefferman showed that when $1 < p < \infty$, there is a constant $C$ so that 
\[
\norm{\sup_N |S_N(f)|}_1 \leq C \norm{f}_p
\]
for all $f \in L^p[-\pi, \pi]$ \cite{Fefferman.1973,Fefferman.1997}.  We can (and do) assume that $C$ is a positive integer.  The operator $f \mapsto \sup_N |S_N(f)|$ is known as the \emph{Carleson operator}.

Let $E = \{e_n\ :\ n \in \Z\}$.  A \emph{trigonometric polynomial}  is a function in the linear span of $E$.  
If $p$ is a trigonometric polynomial, then the \emph{degree} of $p$
is the smallest $d \in \N$ so that $S_d(p) = p$.

\subsection{Complex analysis}

We now summarize the required information on analytic and harmonic functions, in particular harmonic measure.  This material will be used exclusively in Section \ref{sec:2ndproof} (the proof of Theorem \ref{thm:main.2}).  
More expansive treatments of analytic and harmonic functions can be found in \cite{Conway.1978} and \cite{Nehari.1952}; the material on harmonic functions is drawn from \cite{Garnett.Marshall.2005}.

Suppose $U \subseteq \C$ is open and connected.  Recall that a function $f : U \rightarrow \C$ is \emph{analytic} if it has a power series expansion at each point of $U$; equivalently, if $f$ is differentiable at each $z_0 \in U$ in the sense that 
\[
\lim_{z \rightarrow z_0} \frac{f(z) - f(z_0)}{z - z_0}
\]
exists.

Let $\D$ denote the unit disk, and let $\lambda$ denote Lebesgue measure on the unit circle.  The points on the unit circle are called the \emph{unimodular points}.  
When $f$ is analytic on $\D$, let 
\[
a_n(f) = \frac{f^{(n)}(0)}{n!}
\]
for all $n \in \N$.  That is, $a_n(f)$ is the $(n+1)^{st}$ coefficient of the MacLaurin series of $f$.
Thus, 
\[
f(z) = \sum_{n = 0}^\infty a_n(f) z^n
\]
for all $z \in \D$.

Now we turn our attention to harmonic functions. Again, let $U$ be a subset of the plane that is open and connected.  Recall that a function 
$u : U \rightarrow \R$ is \emph{harmonic} if it is twice continuously differentiable and satisfies Laplace's equation 
\[
\frac{\partial^2 u}{\partial x^2} + \frac{\partial^2 u}{\partial y^2} = 0.
\]
When $u$ is harmonic on $\D$, let $\tilde{u}$ denote the harmonic conjugate of 
$u$ that maps $0$ to $0$.  
That is, $\tilde{u}$ is the harmonic function on $\D$ so that $\tilde{u}(0) = 0$ and so that 
$u$ and $\tilde{u}$ satisfy the Cauchy-Riemann equations:
\[
\frac{\partial u}{\partial x} = \frac{\partial \tilde{u}}{\partial y}\ \ \ \frac{\partial u}{\partial y} = - \frac{\partial \tilde{u}}{\partial x}.
\]
Let $\hat{u} = u + i \tilde{u}$.  Thus, $\hat{u}$ is analytic and is called the \emph{analytic extension} of $u$.

When $B$ is a Borel subset of the unit circle, there is a harmonic function $u$ on the unit disk so that 
for all unimodular $\zeta$, $\lim_{z \rightarrow \zeta} u(z) = \chi_B(\zeta)$ (where $\chi_A$ denotes the characteristic function of $A$); let $\omega(z, B, \D) = u(z)$.  The quantity $\omega(z, B, \D)$ is called the 
\emph{harmonic measure of $B$ at $z$}.  For each $z$ in the unit disk, $\omega(z, \cdot, \D)$ is
a Borel probability measure on the unit circle.  Moreover, $\omega(0, B, \D) = (2\pi)^{-1}\lambda(B)$ \cite{Garnett.Marshall.2005}.

An explicit formula for the harmonic measure of an open arc on the unit circle can be obtained 
as follows.  Let $\Log$ denote the principal branch of the complex logarithm.  That is, 
\[
\Log(z) = \int_1^z \frac{1}{\zeta} d\zeta
\]
for all points $z$ that do not lie on the negative real axis.  Let $\Arg = \Im(\Log)$.  
Suppose $A = \{e^{i\theta}\ :\ \theta_1 < \theta < \theta_2\}$ where $-\pi < \theta_1 < \theta_2 <\pi$.  
Then
\[
\omega(z, A, \D) = \frac{1}{\pi} \Arg\left( \frac{z - e^{i \theta_2}}{z - e^{i \theta_1}}\right) - \frac{1}{2\pi}(\theta_2 - \theta_1).
\]
(See Exercise 1 on p.\ 26 of \cite{Garnett.Marshall.2005}.)  It follows that 
\begin{eqnarray}
\tilde{\omega}(z, A, \D) & = & \frac{1}{\pi} \ln\left| \frac{z - e^{i \theta_2}}{z - e^{i \theta_1}} \right|\mbox{ and}\\
\hat{\omega}(z, A, \D) & = & \frac{1}{\pi i} \Log\left( \frac{z - e^{i \theta_2}}{z - e^{i \theta_1}} \right) - \frac{1}{2\pi} (\theta_2 - \theta_1).\label{eqn:omega.hat}
\end{eqnarray}

\subsection{Computable analysis}

We now use the classical concepts of computability in a discrete setting to define the concept of computability in a continuous setting. 

A complex number $z$ is \emph{computable} if there is an algorithm that, given a nonnegative integer 
$k$ as input, computes a rational point $q$ so that $|q - z| < 2^{-k}$.  A sequence $\{a_n\}_{n \in \Z}$ of points in the plane is \emph{computable} if there is an algorithm that, given an $n \in \Z$ and a $k \in \N$ as input, computes a rational point $q$ so that $|a_n - q| < 2^{-k}$.

Let us call a trigonometric polynomial $\tau$ \emph{rational} if each of its coefficients is a 
rational point.

\begin{definition}\label{def:comp.vector}
Suppose $p \geq 1$ is a computable real and suppose $f \in L^p[-\pi, \pi]$.  Then $f$ is a \emph{computable vector of $L^p[-\pi, \pi]$} if there is an algorithm that, given $k \in \N$ as input, computes a rational polynomial $\tau$ so that $\norm{f - \tau}_p < 2^{-k}$.
\end{definition}

In other words, a vector $f \in L^p[-\pi, \pi]$ is computable if it is possible to compute 
arbitrarily good approximations of $f$ by rational trigonometric polynomials.

The next proposition states the fundamental computability results we shall need about vectors 
in $L^p[-\pi, \pi]$.  

\begin{proposition}\label{prop:Fourier.comp}
Suppose $p \geq 1$ is a computable real and $f \in L^p[-\pi, \pi]$.  
\begin{enumerate}
	\item If $f$ is a computable vector, then $\norm{f}_p$ and $\{c_n(f)\}_{n \in \Z}$ are computable. \label{prop:Fourier.comp::itm:transform.norm}
	
	\item If $p = 2$, then $f$ is computable if both $\norm{f}_2$ and $\{c_n(f)\}_{n \in \Z}$ are 
computable.  \label{prop:Fourier.comp::itm:converse.L2}
\end{enumerate}
\end{proposition}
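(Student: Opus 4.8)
The plan is to treat the two parts separately. Part (1) will be a matter of stability estimates together with the fact that integrals of computable functions are computable, while part (2) will rest on Parseval's identity, which is what lets us upgrade mere convergence of the Fourier series to \emph{effective} convergence.

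For part (1), fix the algorithm witnessing that $f$ is a computable vector; on input $k$ it returns a rational trigonometric polynomial $\tau$ with $\norm{f - \tau}_p < 2^{-k}$. For the Fourier coefficients I would use H\"older's inequality on the finite-measure space $[-\pi, \pi]$ to get
\[
|c_n(f) - c_n(\tau)| \leq \frac{1}{2\pi} \norm{f - \tau}_1 \leq \frac{1}{2\pi} (2\pi)^{1 - 1/p} \norm{f - \tau}_p \leq \norm{f - \tau}_p < 2^{-k},
\]
uniformly in $n$; since $c_n(\tau)$ is computed directly from the (rational) coefficients of $\tau$, this shows $\{c_n(f)\}_{n \in \Z}$ is computable. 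For the norm, the reverse triangle inequality gives $|\norm{f}_p - \norm{\tau}_p| \leq \norm{f - \tau}_p < 2^{-k}$, so it is enough to check that $\norm{\tau}_p$ is computable uniformly in the rational trigonometric polynomial $\tau$: the map $t \mapsto |\tau(t)|^p = (|\tau(t)|^2)^{p/2}$ is a computable function of $t$ with a modulus of continuity computable from $\tau$ and $p$, so $\int_{-\pi}^\pi |\tau(t)|^p\,dt$, and hence its $p$-th root, is computable.

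For part (2), with $p = 2$, I would use that $S_N(f)$ is the orthogonal projection of $f$ onto the span of $e_{-N}, \dots, e_N$ in $L^2[-\pi, \pi]$, so that $S_N(f) \to f$ in $L^2$ and, by Parseval's identity,
\[
\norm{f - S_N(f)}_2^2 = \norm{f}_2^2 - 2\pi \sum_{n = -N}^N |c_n(f)|^2.
\]
Given $k$: since $\norm{f}_2$ is computable and $2\pi \sum_{n=-N}^N |c_n(f)|^2$ is computable uniformly in $N$, and since the right-hand side above is nonnegative and tends to $0$ as $N \to \infty$, one can search for an $N$ with $\norm{f}_2^2 - 2\pi \sum_{n=-N}^N |c_n(f)|^2 < 2^{-2k-2}$; this search halts. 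For such $N$ we have $\norm{f - S_N(f)}_2 < 2^{-k-1}$. Finally, choose rational points $q_n$ with $|q_n - c_n(f)| < 2^{-k-1} (2\pi(2N+1))^{-1/2}$ for $|n| \leq N$ and put $\tau = \sum_{n=-N}^N q_n e_n$; orthogonality of the $e_n$ gives $\norm{S_N(f) - \tau}_2^2 = 2\pi \sum_{n=-N}^N |c_n(f) - q_n|^2 < 2^{-2k-2}$, so $\norm{f - \tau}_2 < 2^{-k}$, as required.

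The one step with genuine content is the search in part (2); everything else is routine manipulation of the triangle and H\"older inequalities and of computable integrals. The hypothesis that $\norm{f}_2$ be computable is precisely what makes that search possible, and it is genuinely needed: by the usual $\ell^2$ example, a non-computable vector of $L^2[-\pi, \pi]$ can have a computable coefficient sequence. It is also the reason part (2) is stated only for $p = 2$, since the argument uses the Hilbert-space structure --- orthogonality and Parseval --- which has no counterpart for $p \neq 2$.
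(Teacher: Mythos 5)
Your proof is correct and follows essentially the same route as the paper: part (1) via approximation by rational trigonometric polynomials together with the bound $|c_n(f)-c_n(\tau)|\leq \norm{f-\tau}_p$ (you use H\"older where the paper uses Jensen, a cosmetic difference), and part (2) via Parseval, using the computability of $\norm{f}_2$ and of the coefficients to effectively locate $N$ with a small tail and then approximating $S_N(f)$ by a rational polynomial. The only difference is that you spell out the effective search and the rational approximation step that the paper leaves implicit.
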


\begin{proof}
Suppose $\tau$ is a rational trigonometric polynomial.  The $L^p$-norm of $\tau$ can be computed directly from $\tau$.  
Since $| \norm{f}_p - \norm{\tau}_p| \leq \norm{f - \tau}_p$, it follows that $\norm{f}_p$ is computable.
We also have
\begin{align*}
|c_n(f) - c_n(\tau)|^p & = \left| \int_{-\pi}^\pi (f(\theta) - \tau(\theta))e^{i n \theta}\frac{d\theta}{2\pi} \right|^p \\
& \leq \left(\int_{-\pi}^\pi |f(\theta) - \tau(\theta)| \frac{d\theta}{2\pi}\right)^p\\
& \leq \int_{-\pi}^\pi \left|f(\theta) - \tau(\theta)\right|^p\frac{d\theta}{2\pi}=\norm{f - \tau}_p^p,
\end{align*}
where the last step is by Jensen's Inequality. 
It follows that $\{c_n(f)\}_{n \in \Z}$ is computable.

Now suppose $p = 2$ and suppose $\{c_n(f)\}_{n \in \Z}$ and $\norm{f}_2$ are computable.  Since $\norm{f}_2^2 = \sum_{n \in \Z} |c_n(f)|^2$ and $\norm{f - S_N(f)}_2^2 = \sum_{|n| > N} |c_n(f)|^2$, it follows
that $f$ is a computable vector in $L^2[-\pi, \pi]$.  
\end{proof}

The following corollary shows that the computability of a vector in $L^p[-\pi, \pi]$ is distinct 
from the computability of its Fourier coefficients.

\begin{corollary}
There is an incomputable vector $f \in L^2[-\pi, \pi]$ so that $\{c_n(f))\}_{n \in \Z}$ is computable.
\end{corollary}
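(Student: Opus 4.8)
The plan is to exploit the asymmetry between the two parts of Proposition~\ref{prop:Fourier.comp}: part~(\ref{prop:Fourier.comp::itm:transform.norm}) says a computable vector has both a computable norm and a computable sequence of Fourier coefficients, whereas part~(\ref{prop:Fourier.comp::itm:converse.L2}) recovers a computable vector from the coefficients only when it is handed the norm as well. So it suffices to manufacture an $f \in L^2[-\pi,\pi]$ whose Fourier coefficients form a computable sequence but whose norm $\norm{f}_2$ is not a computable real; by part~(\ref{prop:Fourier.comp::itm:transform.norm}) such an $f$ cannot be a computable vector. The classical input that makes this possible is the existence of Specker sequences, equivalently of non-computable left-c.e.\ reals.

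Concretely, I would first fix a non-computable left-c.e.\ real $\alpha$ with $0 < \alpha < 1$ (for instance a halting probability $\Omega$), together with a nondecreasing computable sequence of rationals $(\alpha_s)_{s \ge 0}$ with $\alpha_0 = 0$ and $\lim_s \alpha_s = \alpha$. Set $\beta_s = \alpha_{s+1} - \alpha_s$; then each $\beta_s \ge 0$ is a rational computable uniformly in $s$, and $\sum_{s \ge 0} \beta_s = \alpha$. Fix also a computable bijection $s \mapsto m_s$ from $\N$ onto $\Z$ (for instance $m_0 = 0$, $m_{2k-1} = k$, $m_{2k} = -k$), and let $f$ be the element of $L^2[-\pi,\pi]$ — unique by the Riesz--Fischer theorem, since $\sum_s (\sqrt{\beta_s})^2 = \alpha < \infty$ — whose Fourier coefficients satisfy $c_{m_s}(f) = \sqrt{\beta_s}$ for every $s$. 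Then $\norm{f}_2^2 = \sum_{n \in \Z} |c_n(f)|^2 = \sum_{s \ge 0} \beta_s = \alpha$.

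It then remains to verify the two assertions. For $n \in \Z$, writing $s$ for the unique index with $m_s = n$ (computable from $n$), we have $c_n(f) = \sqrt{\beta_s}$, and since $s \mapsto \sqrt{\beta_s}$ is computable this yields arbitrarily good rational approximations to $c_n(f)$ uniformly in $n$; hence $\{c_n(f)\}_{n \in \Z}$ is computable. On the other hand, if $f$ were a computable vector, then Proposition~\ref{prop:Fourier.comp}(\ref{prop:Fourier.comp::itm:transform.norm}) would force $\norm{f}_2 = \sqrt{\alpha}$ to be computable, whence $\alpha = (\norm{f}_2)^2$ would be computable, contradicting the choice of $\alpha$. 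Thus $f$ is the desired incomputable vector with computable Fourier coefficients.

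The only step that is not pure bookkeeping is the very first one — knowing that a non-computable real can be presented as the sum of a uniformly computable series of nonnegative rationals — and once that classical fact is recalled, the rest is an immediate application of Proposition~\ref{prop:Fourier.comp}. The sole point of care is to take $\alpha$ strictly positive, so that $\sqrt{\alpha}$ is an honest real whose computability is equivalent to that of $\alpha$.
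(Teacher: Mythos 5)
Your proposal is correct and follows essentially the same route as the paper: build $f$ by prescribing computable Fourier coefficients whose squares sum to an incomputable real, then apply Proposition~\ref{prop:Fourier.comp}(\ref{prop:Fourier.comp::itm:transform.norm}) to conclude $f$ is not a computable vector. The only difference is cosmetic — the paper simply cites Specker for a computable sequence of positive rationals $r_n$ with $\sum_n r_n^2$ incomputable and sets $f=\sum_n r_n e_n$, whereas you unfold that construction explicitly from a left-c.e.\ real and take square roots of the rational increments.
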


\begin{proof}
Let $\{r_n\}_{n \in \N}$ be any computable sequence of positive rational numbers so that 
$\sum_{n = 0}^\infty r_n^2$ is incomputable.  (The existence of such a sequence follows from the constructions of E. Specker \cite{Specker.1949}.)  Set $f = \sum_{n = 0}^\infty r_n e_n$. Then $\norm{f}_2^2 = \sum_{n = 0}^\infty r_n^2$ is incomputable.  Thus, by Proposition \ref{prop:Fourier.comp}, $f$ is incomputable. 
\end{proof}

We now discuss computability of planar sets and functions.  A comprehensive treatment of the computability of functions and sets in continuous settings can be found in \cite{Weihrauch.2000}; the reader may also see \cite{Turing.1937}, \cite{Grzegorczyk.1957}, \cite{Lacombe.1955.a}, \cite{Lacombe.1955.b}, \cite{Brattka.Weihrauch.1999}, \cite{Pour-El.Richards:1989}, and \cite{Braverman.Cook.2006}.  To begin, an interval is \emph{rational} if its endpoints are rational numbers.   
An \emph{open (closed) rational rectangle} is a Cartesian product of open (closed) rational intervals. 

An open subset of the plane $U$ is \emph{computably open} if it is open and the set of all 
closed rational rectangles that are included in $U$ is computably enumerable.  
On the other hand, an open subset of the real line $X$ is \emph{computably open} 
if the set of all closed rational \emph{intervals} that are included in $X$ is computably open.  
A sequence of open sets of reals $\{U_n\}_{n \in \N}$ is \emph{computable} if $U_n$ is computably open uniformly in $n$; that is, if there is an algorithm that, given any $n \in \N$ as input, 
produces an algorithm that enumerates the closed rational intervals included in $U_n$.

Suppose $X$ is a compact subset of the plane.  A \emph{minimal cover} of $X$ is a 
finite sequence of open rational rectangles $(R_0, \ldots, R_m)$ so that 
$X \subseteq \bigcup_j R_j$ and so that $X \cap R_j \neq \emptyset$ for all $j \leq m$.  
We say that $X$ is \emph{computably compact} if
the set of all minimal covers of $X$ is computably enumerable.

Suppose $f$ is a function that maps complex numbers to complex numbers.  We say that $f$ is \emph{computable} if there is an algorithm $P$ that satisfies the following three criteria.
\begin{itemize}
	\item \bf Approximation:\rm\ Whenever $P$ is given an open rational rectangle as input, it either does not halt or produces an open rational rectangle as output.  (Here, the input rectangle is regarded as an approximation of some $z \in \dom(f)$ and the output rectangle is regarded as an approximation of $f(z)$.)

	\item \bf Correctness:\rm\ Whenever $P$ halts on an open rational rectangle $R$, the rectangle it outputs contains $f(z)$ for each $z \in R \cap \dom(f)$. 
	
	\item \bf Convergence:\rm\ Suppose $U$ is a neighborhood of a point $z \in \dom(f)$ and that $V$ is a neighborhood of $f(z)$.  Then, there is an open rational rectangle $R$ such that $R$ contains $z$, $R$ is included in $U$, and when $R$ is put into $P$, $P$ produces a rational rectangle that is included in $V$.
\end{itemize}
For example, $\sin$, $\cos$, and $\exp$ are computable as can be seen by considering their power series expansions and the bounds on the convergence of these series that can be obtained from Taylor's Theorem.  A consequence of this definition is that computable functions on the complex plane must be continuous.  

A sequence of functions $\{f_n\}_{n \in \N}$ of a complex variable is computable if it is computable uniformly in $n$; that is, there is an algorithm that given any $n \in \N$ as input produces an algorithm that computes $f_n$.

It is well known that integration is a computable functional on $C[0,1]$.  It follows that 
when $f$ is a computable analytic function on the unit disk, the sequence $\{a_n(f)\}_{n \in \N}$ is 
computable uniformly in $f$.  It also follows that $\Log$ is computable.

A \emph{modulus of convergence} for a sequence $\{a_n\}_{n \in \N}$ of points in a complete metric space $(X, d)$ is a function $g : \N \rightarrow \N$ so that $d(a_m, a_n) < 2^{-k}$ whenever $m,n \geq g(k)$.  Thus, a sequence of points in a complete metric space converges if and only if it has a modulus of convergence.  Suppose $p \geq 1$ is computable.  If $\{f_n\}_{n \in \N}$ is a computable and convergent sequence of vectors in $L^p[-\pi, \pi]$, then $\lim_n f_n$ is a computable vector if and only if $\{f_n\}_{n \in \N}$ has a \emph{computable} modulus of convergence.

Suppose $f$ is a uniformly continuous computable function that maps complex numbers to complex numbers.  
A \emph{modulus of uniform continuity} for $f$ is a function $m : \N \rightarrow \N$ so that 
$|f(z_0) - f(z_1)| < 2^{-k}$ whenever $z_0, z_1 \in \dom(f)$ and $|z_0 - z_1| \leq 2^{-m(k)}$.   
If the domain of $f$ is computably compact, then $f$ has a computable modulus of uniform continuity.

Suppose $\{a_n\}_{n \in \N}$ is a sequence of complex numbers so that 
$\sum_{n = 0}^\infty a_n z^n$ converges whenever $|z| < 1$, and suppose 
$G$ is a compact subset of the unit disk.  A \emph{modulus of uniform convergence} for this series on $G$ is a function $m : \N \rightarrow \N$ so that $\left| \sum_{n = m(k)}^\infty a_n z^n\right| < 2^{-k}$ whenever $z \in G$ and $k \in \N$.  If the sequence $\{a_n\}_{n \in \N}$ is computable and if $G$ is computably compact, then the series $\sum_{n = 0}^\infty a_n z^n$ has a computable modulus 
of uniform convergence on $G$.

We note that when $p \geq 1$ is a computable real and $f \in L^p[-\pi, \pi]$, there are
two senses in which $f$ can be ``computable'': as a vector and as a function.  These fail to
coincide.  By definition, a computable function is continuous.  However, there are discontinuous functions in $L^1[-\pi, \pi]$ that are computable as vectors; e.g., the greatest integer function. 
Moreover, there are continuous functions in $L^1[-\pi, \pi]$ that are computable as vectors but not as functions.  

Lastly, a \emph{lower semicomputable function} is a function $T:[-\pi,\pi]\rightarrow [0,\infty]$ that is the sum of a computable sequence of nonnegative real-valued functions.

\subsection{Algorithmic randomness}

There are three different approaches to defining the concept of randomness formally. The one we will find useful for this paper is the measure-theoretic one: A random point in a given probability space is said to be random if it avoids all null classes generated in a certain way by computably enumerable functions. Thus, for any reasonable randomness notion, the class of random points is conull. For a general introduction to algorithmic randomness, see \cite{dhbook} or \cite{niesbook}.

While the most-studied randomness notion is Martin-L\"of randomness, a weaker notion, Schnorr randomness, lies at the heart of our paper. Schnorr randomness, like most other randomness notions, was originally defined in the Cantor space $2^\omega$ with Lebesgue measure \cite{schnorr}; however, the definition is easily adaptable to any computable measure space, in particular $[-\pi,\pi]$ with the Lebesgue measure $\mu$.

\begin{definition}\label{def:Schnorr-test}
	A \emph{Schnorr test} is a computable sequence $\{ V_n\}_{n \in \N}$ of open sets of reals so that $\mu(V_n)\leq 2^{-n}$ for all $n$ and so that the sequence $\{\mu(V_n)\}_{n \in \N}$ is computable. A real number $x$ is said to be \emph{Schnorr random} if for every Schnorr test $\{ V_n\}_{n \in \N}$, $x\not\in \bigcap_n V_n$.
\end{definition}

There are many other characterizations of Schnorr randomness, such as a complexity-based characterization \cite{dg04} and a martingale characterization \cite{schnorr}. In this paper, we will use an integral test characterization due to Miyabe \cite{miyabe13} which is rooted in computable analysis.

\begin{definition}\label{def:int-test}
	A \emph{Schnorr integral test} is a lower semicomputable function $T:[-\pi,\pi]\rightarrow [0,\infty]$ so that $\int_{-\pi}^\pi T\ d\mu$ is a computable real.
\end{definition}

Thus, if $T$ is a Schnorr integral test, then $T(x)$ is finite for almost every $x \in [-\pi, \pi]$.  
Miyabe's characterization states that $x \in [-\pi, \pi]$ is Schnorr random if and only if 
$T(x) < \infty$ for every Schnorr integral test $T$.

\section{Proof of Theorem \ref{thm:main.1}}\label{sec:1stproof}

Our proof of Theorem \ref{thm:main.1} is based on the following definition and lemmas.

\begin{definition}\label{def:modulus.ae.conv}
Suppose $\{f_n\}_{n \in \N}$ is a sequence of functions on $[-\pi, \pi]$.  
A function $\eta : \N \times \N \rightarrow \N$ is a \emph{modulus of almost-everywhere convergence} for $\{f_n\}_{n \in \N}$ if 
\[
\mu(\{t \in [-\pi, \pi]\ :\ \exists M,N \geq \eta(k,m)\ |f_N(t) - f_M(t)| \geq 2^{-k}\}) < 2^{-m}
\]
for all $k$ and $m$.
\end{definition}

Thus, every sequence of functions on $[-\pi, \pi]$ that converges almost everywhere
has a modulus of almost-everywhere convergence.  Our goal, as stated in the following lemma, 
is to show that the sequence of partial sums for the Fourier series of a computable vector in $L^p[-\pi, \pi]$ has a \emph{computable} modulus of almost-everywhere convergence.  

\begin{lemma}\label{lm:modulus.ae.conv}
Suppose $p$ is a computable real so that $p > 1$, and suppose $f$ is a computable vector 
in $L^p[-\pi, \pi]$.  Then, $\{S_N(f)\}_{N \in \N}$ has a computable modulus of almost-everywhere convergence.
\end{lemma}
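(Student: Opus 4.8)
The plan is to exploit the Carleson--Fefferman maximal inequality $\norm{\sup_N |S_N(f)|}_1 \leq C\norm{f}_p$ together with the fact that $f$ is a computable vector in $L^p$, so that arbitrarily good rational trigonometric polynomial approximations $\tau$ to $f$ are available algorithmically. The key observation is that for a rational trigonometric polynomial $\tau$ of degree $d$, the sequence $\{S_N(\tau)\}_N$ is eventually constant (equal to $\tau$ for $N \geq d$), so it converges trivially and with a modulus we can read off. The Carleson operator is \emph{sublinear}: $\sup_N|S_N(f)| \leq \sup_N|S_N(f-\tau)| + \sup_N|S_N(\tau)|$, and hence for $N,M \geq d$ we get $|S_N(f) - S_M(f)| = |S_N(f-\tau) - S_M(f-\tau)| \leq 2\sup_N|S_N(f-\tau)|$. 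So the measure of the bad set where the Cauchy tails of $\{S_N(f)\}$ exceed $2^{-k}$ (for indices past the degree of $\tau$) is bounded, via Markov's inequality applied to the Carleson operator of $f - \tau$, by $2^k \norm{\sup_N|S_N(f-\tau)|}_1 \leq 2^k C \norm{f - \tau}_p$.

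Concretely, I would proceed as follows. Given $k$ and $m$ as input, use the computability of $f$ as a vector to compute a rational trigonometric polynomial $\tau$ with $\norm{f - \tau}_p < 2^{-(k + m + \log_2 C + 2)}$ (recall $C$ may be taken to be a positive integer, so $\log_2 C$ can be replaced by any integer upper bound, or one just multiplies through by $C$). Let $d$ be the degree of $\tau$; this is computable from $\tau$. Define $\eta(k,m) = d$. Then for any $M, N \geq \eta(k,m)$ we have $S_N(\tau) = S_M(\tau) = \tau$, so $S_N(f) - S_M(f) = S_N(f-\tau) - S_M(f-\tau)$, whence $|S_N(f)(t) - S_M(f)(t)| \leq 2\sup_K|S_K(f-\tau)(t)|$ pointwise. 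Therefore the set in Definition~\ref{def:modulus.ae.conv} is contained in $\{t : \sup_K|S_K(f-\tau)(t)| \geq 2^{-(k+1)}\}$, whose measure is at most $2^{k+1}\norm{\sup_K|S_K(f-\tau)|}_1 \leq 2^{k+1} C \norm{f-\tau}_p < 2^{-m}$ by our choice of the approximation quality. This gives $\eta$ as a total computable function witnessing the modulus of almost-everywhere convergence.

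One subtlety worth addressing carefully: Definition~\ref{def:modulus.ae.conv} quantifies over \emph{all} $M, N \geq \eta(k,m)$, including indices below the degree $d$ if $\eta(k,m)$ were chosen smaller, but since we set $\eta(k,m) = d$ exactly, every index in play is $\geq d$ and the "eventually equals $\tau$" argument applies to both $S_N(\tau)$ and $S_M(\tau)$ simultaneously. A second point is that the argument really only needs a \emph{one-sided} (Markov) estimate, not the full strength of any convergence theorem — in particular the proof does not invoke Carleson's Theorem itself, only Fefferman's maximal inequality, which is the genuinely hard analytic input but is quoted as a black box in the excerpt. The main obstacle, such as it is, is simply to keep the bookkeeping of constants straight (absorbing $C$, handling the factor $2$ from sublinearity, and matching the target $2^{-m}$), together with confirming that "degree of $\tau$" is computable from a rational trigonometric polynomial — which is immediate since one can test whether the top coefficients vanish. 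I expect no serious difficulty beyond this; the conceptual content is entirely in recognizing that approximation in $L^p$-norm plus a maximal inequality converts into a quantitative, computable control on the Cauchy behavior of the partial sums almost everywhere.
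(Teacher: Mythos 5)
Your proof is correct and follows essentially the same route as the paper's: compute a rational trigonometric polynomial $\tau$ approximating $f$ to precision $2^{-(k+m+O(1))}C^{-1}$, set $\eta(k,m)$ to its degree so that $S_N(f)-S_M(f)=S_N(f-\tau)-S_M(f-\tau)$ for $M,N\geq\eta(k,m)$, and then bound the measure of the bad set via Fefferman's maximal inequality and Chebyshev's inequality. The only differences are cosmetic bookkeeping of constants.
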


With this lemma in hand, Theorem \ref{thm:main.1} follows from the next lemma.

\begin{lemma}\label{lm:effective-convergence}
Assume $\{f_n\}_{n \in \N}$ is a uniformly computable sequence of functions on $[-\pi, \pi]$ for which there is a computable modulus of almost-everywhere convergence.
Then, the sequence $\{f_n\}_{n \in \N}$ converges at every Schnorr random real.
\end{lemma}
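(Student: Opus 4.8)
The statement to prove is Lemma \ref{lm:effective-convergence}: if $\{f_n\}$ is a uniformly computable sequence of functions on $[-\pi,\pi]$ with a computable modulus $\eta$ of almost-everywhere convergence, then $\{f_n\}$ converges at every Schnorr random point. The natural approach is to build, for each "failure rate" $k$, a Schnorr integral test (or equivalently a Schnorr test) that captures exactly the points at which the Cauchy property fails at scale $2^{-k}$, and then invoke Miyabe's characterization to conclude that no Schnorr random point can lie in the corresponding null set.

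Let me think about what the test should be. Fix $k$. For each $m$, set
\[
U_{k,m} = \{ t \in [-\pi,\pi] : \exists M,N \geq \eta(k,m)\ |f_N(t) - f_M(t)| \geq 2^{-k} \}.
\]
Wait — I need these sets to be uniformly computably open, with computable measure bounded by $2^{-m}$. The bound $\mu(U_{k,m}) < 2^{-m}$ is immediate from the definition of $\eta$. For computable-openness: since $\{f_n\}$ is uniformly computable and hence each $f_n$ is continuous, the set $\{t : |f_N(t) - f_M(t)| > 2^{-k}\}$ is computably open uniformly in $N,M,k$; taking the (finite, since $\eta(k,m)$ is computed and we range $M,N$ over... ) — hmm, actually the union over all $M,N \geq \eta(k,m)$ is an infinite union, but it is still an effective union of uniformly computably open sets, hence computably open uniformly in $k,m$. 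So $\{U_{k,m}\}_m$ is a uniformly computable sequence of open sets with $\mu(U_{k,m}) < 2^{-m}$. The one subtlety is whether the measures $\mu(U_{k,m})$ are themselves computable (needed for a genuine Schnorr test), which is not guaranteed just from computable-openness. So I would not use $U_{k,m}$ directly as the test; instead I'd use the integral-test formulation, which is more forgiving.

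Here is the cleaner route. Define $T_k : [-\pi,\pi] \to [0,\infty]$ by letting $T_k$ be, say, a weighted sum $T_k = \sum_{m} 2^{m} \chi_{U_{k,m}}$ — no, that may have infinite integral. Better: I want a lower semicomputable function whose integral is computable and which is infinite (or large) exactly on $\bigcap_m U_{k,m}$, the set where the $2^{-k}$-Cauchy property fails. Actually the simplest is: since each $U_{k,m}$ is computably open with $\mu(U_{k,m}) < 2^{-m}$, write $\chi_{U_{k,m}}$ as a supremum of an increasing computable sequence of nonnegative continuous functions; then $T := \sum_m \chi_{U_{k,m}}$ is lower semicomputable, and $\int T \, d\mu = \sum_m \mu(U_{k,m})$. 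The issue is still whether that sum — a sum of reals each known only to be $< 2^{-m}$ — is computable. It is at least bounded by $\sum_m 2^{-m} = 2$ and lower semicomputable, but lower semicomputability of a sum with a known upper bound does not give computability. To fix this, I would instead work with the \emph{original} measure bound more carefully: replace $U_{k,m}$ by a computably open subset of known computable measure — or simply appeal to the fact that $\eta$ witnesses $\mu(U_{k,m}) < 2^{-m}$, thin out $U_{k,m}$ to $U_{k,m} \setminus U_{k,m+1}$ isn't open... Instead, the honest move: for a single Schnorr random $t_0$, it suffices to note that $t_0 \in \bigcap_m U_{k,m}$ would contradict Schnorr randomness provided $\{U_{k,m}\}_m$ can be refined to a Schnorr test; and one can always shrink a uniformly computably open sequence with $\mu(V_m) < 2^{-m}$ to a Schnorr test by an approximation argument (replace $V_m$ by a computably open set $V_m' \supseteq V_m$ with $\mu(V_m') $ computable and $\le 2^{-m+1}$, re-index). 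So the plan is: (i) form $U_{k,m}$; (ii) bound its measure via $\eta$; (iii) massage into a genuine Schnorr test $\{V_n\}$ with $\bigcap_m U_{k,m} \subseteq \bigcap_n V_n$; (iv) conclude any Schnorr random $t_0$ avoids $\bigcap_m U_{k,m}$, so for every $k$ there is an $m$ with $t_0 \notin U_{k,m}$, i.e. $|f_N(t_0) - f_M(t_0)| < 2^{-k}$ for all $M,N \geq \eta(k,m)$; (v) this says $\{f_n(t_0)\}$ is Cauchy, hence convergent.

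**The main obstacle.** The only real work is step (iii): turning the uniformly computably open sequence $\{U_{k,m}\}_m$ with the mere \emph{inequality} $\mu(U_{k,m}) < 2^{-m}$ into a bona fide Schnorr test, whose defining feature is that the measures are \emph{computable}. The standard fix — approximate each $U_{k,m}$ from outside by finite unions of rational rectangles, and use the gap between $\mu(U_{k,m})$ and $2^{-m}$ to absorb the approximation error while keeping containment — is routine but must be done with some care about uniformity in $k$ and $m$. Alternatively, I would phrase everything through Miyabe's Schnorr integral test characterization, defining a single lower semicomputable $T_k$ with $T_k = \infty$ on $\bigcap_m U_{k,m}$ and arranging $\int T_k\, d\mu$ to be computable by building the weights adaptively from a computable sequence of rationals $\varepsilon_{k,m}$ with $\mu(U_{k,m}) < \varepsilon_{k,m} \le 2^{-m}$ obtained from $\eta$ and an effective computation of $\mu(U_{k,m})$ from above (computable-openness gives a computable \emph{upper} approximation to the measure of a computably open set when combined with a known bound, since the complement's inner measure is co-c.e.) — then $\int T_k\,d\mu = \sum_m w_m \varepsilon_{k,m}$ with $w_m$ chosen so the sum converges to a computable value. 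Either way, the conceptual content is light: once the test exists, Schnorr randomness of $t_0$ plus the Borel–Cantelli-flavored indexing immediately gives the Cauchy property at every scale, and completeness of $\C$ finishes the proof.
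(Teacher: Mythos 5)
Your outline correctly isolates where the work lies, but the step you label (iii) --- the only step carrying real content --- is exactly where the argument breaks, and neither of your proposed repairs is sound. The sets $U_{k,m}$ are uniformly computably open with $\mu(U_{k,m})<2^{-m}$, but their measures are only lower semicomputable, and your blanket claim that any such sequence ``can always be massaged into a genuine Schnorr test'' by passing to computably open supersets $V_m'\supseteq U_{k,m}$ of computable measure is false in general: if it were true, every Martin-L\"of test could be covered by a Schnorr test, so Schnorr randomness would coincide with Martin-L\"of randomness, which it does not. Your alternative via Miyabe's integral tests has the same flaw in a different guise: for $T_k=\sum_m w_m\chi_{U_{k,m}}$ the integral is $\sum_m w_m\,\mu(U_{k,m})$, not $\sum_m w_m\varepsilon_{k,m}$, and knowing rational upper bounds $\varepsilon_{k,m}\ge\mu(U_{k,m})$ does not make that sum computable (the head of the series consists of finitely many merely left-c.e.\ reals). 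The parenthetical claim that computable openness plus a known bound yields a computable \emph{upper} approximation to the measure is also incorrect --- lower semicomputability of $\mu(U)$ is all one gets, and upper approximability is precisely what is missing.

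The missing idea is to exploit the uniform computability of the functions $f_n$ themselves rather than working with indicator functions of open sets. This is what the paper does: setting $N_k=\eta(k,k)$, it defines $g_k(t)=\min\{1,\max\{|f_M(t)-f_N(t)|:N_k<M,N\le N_{k+1}\}\}$, a \emph{finite} block maximum, so that $\{g_k\}_{k\in\N}$ is a uniformly computable sequence of continuous functions whose integrals are computable, with the effective tail bound $\int_{-\pi}^{\pi}g_k\,d\mu\le 2^{-k+4}$ coming from $g_k\le 1$ and $\mu\{g_k\ge 2^{-k}\}\le 2^{-k}$. Hence $T=\sum_k g_k$ is lower semicomputable with computable integral, i.e.\ a Schnorr integral test; and if $\{f_n(t_0)\}_{n\in\N}$ diverges, a telescoping estimate across the blocks shows every tail $\sum_{k\ge k_1}g_k(t_0)$ exceeds a fixed positive constant, so $T(t_0)=\infty$ and $t_0$ is not Schnorr random. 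Your step (iv)--(v) reasoning (Cauchy at every scale, then completeness) is fine once such a test exists, but as written your construction of the test does not go through.
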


Generalizations of Lemma \ref{lm:effective-convergence} can be found in Galatolo, Hoyrup, and Rojas \cite[Thm.~1]{Galatolo.Hoyrup.Rojas:2010a}  and well as Rute \cite[Lemma~3.19 on p.~41]{Rute:2013pd}.  Our proof is new.  Theorem \ref{thm:main.1} follows by applying Lemma \ref{lm:effective-convergence} to the sequence of partial sums of $f$.

\begin{proof}[Proof of Lemma \ref{lm:modulus.ae.conv}]
We compute $\eta : \N^2 \rightarrow \N$ as follows.  
Let $k,m \in \N$ be given as input.  Compute a rational trigonometric polynomial $\tau_{k,m}$ 
so that $\norm{f - \tau_{k,m}}_p \leq 2^{-(m+ k + 3)}C^{-1}$ where $C$ is as in Fefferman's inequality.  Then define $\eta(k,m)$ to be the degree of $\tau_{k,m}$.

By definition, $\eta$ is computable.  We now show that it is a modulus of almost-everywhere convergence.  We begin with some notation.  Let $g \in L^p[-\pi, \pi]$.  Set
\[
E_{k, N_0}(g) = \{t \in [-\pi, \pi]\ :\ \exists M, N \geq N_0\ |S_M(g)(t) - S_N(g)(t)| \geq 2^{-k}\}.
\]
Thus, we aim to show that $\mu(E_{k,\eta(k,m)}(f)) < 2^{-m}$.
For each $k \in \N$, let 
\[
\hat{E}_k(g) = \{t \in [-\pi, \pi]\ :\ \sup_N |S_N(g)(t)| > 2^{-k}\}.
\]
It follows that $E_{k, N_0}(g) \subseteq \hat{E}_{k+2}(g)$.

We claim that 
$E_{k, \eta(k,m)}(f) \subseteq E_{k, \eta(k,m)}(f - \tau_{k,m})$.  We see that if $M, N \geq \eta(k,m)$, then 
\begin{eqnarray*}
|S_M(f)(t) - S_N(f)(t)| & \leq & |S_M(f - \tau_{k,m})(t) - S_N(f - \tau_{k,m})(t)| + |S_M(\tau_{k,m})(t) - S_N(\tau_{k,m})(t)|\\
& = & |S_M(f - \tau_{k,m})(t) - S_N(f - \tau_{k,m})(t)|.
\end{eqnarray*}	
Thus, $E_{k,\eta(k,m)}(f) \subseteq E_{k,\eta(k,m)}(f - \tau_{k,m})$.

It now follows that  
$E_{k, \eta(k,m)}(f) \subseteq \hat{E}_{k + 2}(f - \tau_{k,m})$.  
We complete the proof by showing that $\mu(\hat{E}_{k+2}(f - \tau_{k,m})) < 2^{-m}$.
By 
Fefferman's Inequality and the definition of $\tau_{k,m}$, 
\[
\norm{\sup_N|S_N(f - \tau_{k,m})|}_1 \leq 2^{-(m + k + 3)}.
\]
Thus, by Chebyshev's Inequality, 
\[
\mu(\hat{E}_{k+2}(f - \tau_{k,m})) \leq 2^{-(m + k + 3)}2^{k+2} = 2^{-(m+1)} < 2^{-m}.
\]
Hence, $\mu(\hat{E}_{k+2}(f - \tau_{k,m})) < 2^{-m}$.
\end{proof}

\begin{proof}[Proof of Lemma \ref{lm:effective-convergence}]  We apply Miyabe's characterization of 
Schnorr randomness.  We begin by defining a Schnorr integral test as follows.
Let $\eta$ be a computable modulus of almost-everywhere convergence for $\{f_n\}_{n \in \N}$, and  
abbreviate $\eta(k,k)$ by $N_k$. For each $k \in \N$ and each $t \in [-\pi, \pi]$ define 
\[
g_k(t) = \min\left\{1,\ \  \max \{|f_M(t) - f_N(t))| : N_k < M,N \leq N_{k+1}\}\right\}.
\] 
The sequence $\{g_k\}_{k \in \N}$ is computable.  
Set $T = \sum_{k = 0}^\infty g_k$.  

We now show that $T$ is a Schnorr integral test.  By construction, $T$ is lower semicomputable.  
Therefore, it suffices to show that $\int_{-\pi}^\pi T d\mu$ is a computable real.  To this end, let $m \in \N$ 
be given.  Since $g_k$ is computable uniformly in $k$, it is possible to compute a rational number $q$ so that 
$|\sum_{k = 0}^{m+6} g_k - q| < 2^{-(m+1)}$.  We claim that 
$|\int_{-\pi}^\pi T d\mu - q| < 2^{-m}$.  By the Monotone Convergence Theorem, 
\[
\int_{-\pi}^\pi T\ d\mu = \sum_{k = 0}^\infty \int_{-\pi}^\pi g_k\ d\mu.
\]
Since $g_k \leq 1$ and
$\mu\{t : g_k(t) \geq 2^{-k}\} \leq 2^{-k}$, we have
\begin{align*}
\int_{-\pi}^\pi g_k(t)\,dt &\leq  2^{-k} \cdot \mu\{t : g(t) < 2^{-k}\} + 1 \cdot \mu\{t : g(t) \geq 2^{-k}\} \\
&\leq 2^{-k}\cdot 2\pi + 1\cdot 2^{-k}  \leq 2^{-k+4}.
\end{align*}
Thus, $\int_{-\pi}^\pi T\ d\mu - \sum_{k = 0}^{m+6} g_k < 2^{-(m+1)}$, and we then have  
$|\int_{-\pi}^\pi T\ d\mu - q| < 2^{-m}$.  Hence, $\int_{-\pi}^\pi  T\ d\mu$ is a computable real.

Finally, we show that $T(t_0) = \infty$ whenever $\{f_n(t_0)\}_{n \in \N}$ diverges.  This will complete
the proof of the lemma.  Suppose $\{f_n(t_0)\}_{n \in \N}$ diverges.  Then there exists $k_0 \in \N$ 
so that $\limsup_{M,N} |f_M(t_0) - f_N(t_0)| \geq 2^{-k_0}$.  
It thus suffices to show that $\sum_{k = k_1}^\infty g_k(t_0) \geq 2^{-k_0}$ for all $k_1 \in \N$.  
So, let $k_1 \in \N$.  Without loss of generality, suppose $g_k < 1$ whenever $k \geq k_1$.  By the choice of $k_0$, there exist $M$ and $N$ so that $N_{k_1} \leq M < N$ and $|f_M(t_0) - f_N(t_0)| \geq 2^{-k_0}$.  By forming a telescoping sum and applying the triangle inequality we obtain
\[
|f_M(t_0) - f_N(t_0)| \leq \sum_{k = k_1}^\infty g_k(t_0).
\]
Thus $T(t_0) = \infty$, and the proof is complete.
\end{proof}

\section{Proof of Theorem \ref{thm:main.2}}\label{sec:2ndproof}

Our proof of Theorem \ref{thm:main.2} is based on a construction of Kahane and Katznelson \cite{Kahane.Katznelson.1966} and requires the following sequence of three lemmas.

\begin{lemma}\label{lm:analytic}
Suppose $G$ is a computably compact subset of the unit circle so that $\lambda(G)$ is computable 
and smaller than $2 \pi$.  Then there is a computable function $\psi$ from $\D \cup G$ into the horizontal
strip $\R \times (-\frac{\pi}{2}, \frac{\pi}{2})$ that is analytic on $\D$ and has the property 
that $\Re(\psi(\zeta)) \geq -\frac{3}{4} \ln(\lambda(G)(2\pi)^{-1})$ for all $\zeta \in G$.  Furthermore, 
we may choose $\psi$ so that $\psi(0) = 0$.  
\end{lemma}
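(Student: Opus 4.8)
The plan is to take $\psi$ to be a branch of the logarithm of the analytic extension of the harmonic measure of a finite union of rational arcs containing $G$; passing through the logarithm turns the identity $\Re\omega(\cdot,U,\D)\equiv 1$ on $U$ into a lower bound for $\Re\psi$ on $G$, while the sign condition $\Re\hat\omega>0$ is exactly what confines $\psi$ to the strip.

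First I would set $c=\lambda(G)(2\pi)^{-1}$; by hypothesis this is a computable real in $(0,1)$. Since $c<c^{3/4}<1$, outer regularity gives a finite union of open rational arcs covering $G$ with total measure $<2\pi c^{3/4}$, and — using that $G$ is computably compact and $\lambda(G)$ is computable — such a cover can be found effectively by searching minimal covers and testing the (computable) measure condition; merging overlapping arcs, I obtain pairwise disjoint open rational arcs $A_1,\dots,A_n$ with $G\subseteq U:=A_1\cup\dots\cup A_n$ and $c':=\lambda(U)(2\pi)^{-1}<c^{3/4}$. Note $0<c'<1$, and $\delta_0:=\operatorname{dist}(G,\partial\D\setminus U)$ is a positive computable real because $G$ is compact and contained in the open set $U$. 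Then I set
\[
\psi(z)=\Log\!\left(\frac{\hat\omega(z,U,\D)}{c'}\right),\qquad \hat\omega(z,U,\D)=\sum_{k=1}^{n}\hat\omega(z,A_k,\D),
\]
where $\hat\omega(\cdot,A_k,\D)$ is the analytic extension of $\omega(\cdot,A_k,\D)$ (concretely \eqref{eqn:omega.hat}) and $\Log$ is the principal branch.

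The verification rests on one observation: $\Re\hat\omega(z,U,\D)=\omega(z,U,\D)$ is the Poisson integral of $\chi_U$, so it lies in $(0,1)$ for every $z\in\D$; and since $U$ is open, $\omega(\cdot,U,\D)$ extends continuously to $\D\cup U$ with value $1$ at every point of $U$, and $\tilde\omega(\cdot,U,\D)$ extends continuously there as well (because $\chi_U\equiv1$ near each point of $U$). Hence $\hat\omega(\cdot,U,\D)$ takes values in the open right half‑plane on all of $\D\cup U$, so it never vanishes there; therefore $\psi$ is well defined, analytic on $\D$, and continuous on $\D\cup U\supseteq\D\cup G$. Since $\hat\omega(0,U,\D)=\omega(0,U,\D)=c'$ is real, $\psi(0)=\Log1=0$. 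Since $\Re\hat\omega>0$ on $\D\cup G$, we get $\Im\psi=\arg\hat\omega(\cdot,U,\D)\in(-\tfrac{\pi}{2},\tfrac{\pi}{2})$, so $\psi$ maps $\D\cup G$ into $\R\times(-\tfrac{\pi}{2},\tfrac{\pi}{2})$. Finally, for $\zeta\in G$ one has $|\hat\omega(\zeta,U,\D)|\ge\Re\hat\omega(\zeta,U,\D)=1$, so
\[
\Re\psi(\zeta)=\ln\bigl|\hat\omega(\zeta,U,\D)\bigr|-\ln c'\ \ge\ -\ln c'\ \ge\ -\tfrac34\ln c=-\tfrac34\ln\bigl(\lambda(G)(2\pi)^{-1}\bigr),
\]
the last inequality being precisely the choice $c'\le c^{3/4}$. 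Computability of $\psi$ on $\D\cup G$ is then routine: the $e^{i\alpha_k}$ and $e^{i\beta_k}$ are computable points, $\Log$ is computable, the outer logarithm is always applied to a point of the right half‑plane, and on $\D\cup G$ the arguments of the inner logarithms stay at distance $\ge\delta_0$ from the singular points $e^{i\alpha_k},e^{i\beta_k}$, so the whole expression is uniformly computable there.

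The complex‑analytic part of this is short once one settles on the function $\psi$; what I expect to require genuine care is the effective bookkeeping — rigorously extracting, from ``$G$ computably compact with $\lambda(G)$ computable'', the finite disjoint union $U$ of rational arcs with $\lambda(U)$ prescribed below $2\pi c^{3/4}$, and verifying that $\hat\omega(\cdot,U,\D)$, a priori only a harmonic‑measure object, is computable on $\D$ and extends continuously and computably to $\D\cup U$ with real part identically $1$ on $U$ (here one uses \eqref{eqn:omega.hat} together with computability of integration and of $\Log$). The exponent $3/4$ plays no real role: any $\alpha\in(0,1)$ would do, since it only has to leave slack for the enlargement $G\subseteq U$, and $c<c^{\alpha}<1$ whenever $0<c<1$ and $0<\alpha<1$.
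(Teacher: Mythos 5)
Your proposal is correct and follows essentially the same route as the paper's proof: cover $G$ by finitely many disjoint open arcs $F$ with $\lambda(F)/(2\pi)\leq(\lambda(G)/(2\pi))^{3/4}$, take the analytic extension $\hat\omega(\cdot,F,\D)$ of its harmonic measure, and set $\psi=\Log(\hat\omega(\cdot,F,\D))-\ln(\lambda(F)/(2\pi))$, using $\Re\hat\omega>0$ for the strip bound, $\hat\omega(0,F,\D)=\lambda(F)/(2\pi)$ for the normalization, and the boundary value $1$ on $G$ for the lower bound. Your write-up is in fact slightly more explicit than the paper's at the step $\Re\hat\omega(\zeta,F,\D)=1$ on $G$ (hence $\ln|\hat\omega(\zeta,F,\D)|\geq 0$), which the paper leaves implicit; the only detail you elide is the preliminary rotation ensuring the arcs avoid $-1$ so that the explicit principal-branch formula for $\hat\omega$ applies.
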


\begin{lemma}\label{lm:poly.1}
Suppose $G$ is a computably compact subset of $[-\pi, \pi]$ so that $\lambda(G)$ is computable 
and smaller than $2\pi$.  Then there is a computable and analytic trigonometric polynomial $R$
so that $\Re(R(t)) \geq -\frac{1}{2} \ln(\lambda(G)/ (2\pi))$ for all $t \in G$ and so that 
$|\Im(R(t))| < \pi$ for all $t \in [-\pi, \pi]$.  Furthermore, we may choose $R$ so that 
$R(0) = 0$. 
\end{lemma}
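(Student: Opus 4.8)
The plan is to transport Lemma~\ref{lm:analytic} from the circle to the interval, extract a uniformly convergent power series from the analytic function it supplies, and take a truncation of a suitable dilate of that series.

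First, identify $[-\pi,\pi]$ with the unit circle via $\theta\mapsto e^{i\theta}$; this map is measure preserving and carries computably compact sets to computably compact sets, so $G^{*}:=\{e^{i\theta}:\theta\in G\}$ is a computably compact subset of the unit circle with $\lambda(G^{*})=\lambda(G)$ computable and smaller than $2\pi$. Apply Lemma~\ref{lm:analytic} to $G^{*}$ to obtain a computable $\psi:\D\cup G^{*}\to\R\times(-\frac{\pi}{2},\frac{\pi}{2})$, analytic on $\D$, with $\psi(0)=0$ and $\Re\psi(\zeta)\geq -\frac{3}{4}\ln(\lambda(G)/(2\pi))=\frac{3}{4}c$ for all $\zeta\in G^{*}$, where $c:=-\ln(\lambda(G)/(2\pi))$ is a positive computable real. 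Since the restriction of $\psi$ to $\D$ is a computable analytic function on the unit disk, the sequence $\{a_{n}(\psi)\}_{n\in\N}$ is computable, and $a_{0}(\psi)=\psi(0)=0$.

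Next, fix the dilation parameter. The set $K=\{s\zeta:\zeta\in G^{*},\ \frac12\leq s\leq 1\}$ is a computably compact subset of $\dom(\psi)$, so $\psi$ has a computable modulus of uniform continuity on $K$; from it one computes $r_{0}\in[\frac12,1)$ with $|\psi(r_{0}\zeta)-\psi(\zeta)|<\frac{c}{8}$ for every $\zeta\in G^{*}$. Because $\{z\in\C:|z|\leq r_{0}\}$ is computably compact, $\sum_{n}a_{n}(\psi)z^{n}$ has a computable modulus of uniform convergence on it; choose $k$ with $2^{-k}\leq\min(\frac{c}{8},1)$, let $N$ be the value of that modulus at $k$, and set
\[
Q(z)=\sum_{n=0}^{N-1}a_{n}(\psi)\,r_{0}^{\,n}z^{n},\qquad R(\theta)=Q(e^{i\theta}).
\]
Then $R$ is a computable analytic trigonometric polynomial whose constant coefficient equals $a_{0}(\psi)=\psi(0)=0$, which is the normalization $R(0)=0$; moreover $Q(z)-\psi(r_{0}z)=\sum_{n\geq N}a_{n}(\psi)(r_{0}z)^{n}$ has modulus less than $2^{-k}$ whenever $|z|\leq 1$. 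Consequently, for $\theta\in G$ (so $e^{i\theta}\in G^{*}$ and $r_{0}e^{i\theta}\in\D$),
\[
\Re R(\theta)\geq\Re\psi(r_{0}e^{i\theta})-2^{-k}\geq\Re\psi(e^{i\theta})-\tfrac{c}{8}-2^{-k}\geq\tfrac{3}{4}c-\tfrac{c}{4}=-\tfrac12\ln(\lambda(G)/(2\pi)),
\]
while for every $\theta\in[-\pi,\pi]$ we have $r_{0}e^{i\theta}\in\D$, so $|\Im R(\theta)|\leq|\Im\psi(r_{0}e^{i\theta})|+2^{-k}<\frac{\pi}{2}+1<\pi$.

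The step I expect to be the main obstacle is the constraint that $|\Im R|<\pi$ must hold on \emph{all} of $[-\pi,\pi]$, whereas Lemma~\ref{lm:analytic} only controls $\Re\psi$ on $G^{*}$. The device above is to approximate the dilate $\psi(r_{0}z)$ rather than $\psi$ itself: on the closed disk the dilate still takes values in the strip $\R\times(-\frac{\pi}{2},\frac{\pi}{2})$, which yields the imaginary-part bound everywhere, and the cost — recovering the lower bound for $\Re\psi$ at points of $G^{*}$ from the dilated values — is paid by the modulus of uniform continuity of $\psi$ near $G^{*}$. Everything else (preservation of computable compactness and of Lebesgue measure under $\theta\mapsto e^{i\theta}$, and the effective production of $r_{0}$, $k$, $N$ and the various moduli from the given data, using that $c>0$ is a computable real) is routine given the facts collected in Section~\ref{sec:bg}.
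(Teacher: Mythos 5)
Your proof is correct and follows essentially the same route as the paper's: transfer $G$ to the circle, apply Lemma \ref{lm:analytic}, use a computable modulus of uniform continuity on the cone over the circle image of $G$ to pick the dilation parameter $r_0$, and then truncate the dilated Taylor series of $\psi$ via a computable modulus of uniform convergence, setting $R(t)=\sum_{n<N}a_n(\psi)r_0^n e^{int}$. Your one deviation—taking the modulus of uniform convergence on the closed disk of radius $r_0$ rather than only on the dilated copy of the circle image—is a harmless and in fact slightly cleaner way to secure $|\Im(R(t))|<\pi$ on all of $[-\pi,\pi]$, and your reading of the normalization ``$R(0)=0$'' as the vanishing of the constant coefficient $c_0(R)$ is exactly the property the paper's proof of Lemma \ref{lm:poly.2} uses.
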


\begin{lemma}\label{lm:poly.2}
Suppose $G$ is a computably compact subset of $[-\pi, \pi]$ so that $\lambda(G)$ is computable 
and smaller than $2\pi$.  Then there is a computable trigonometric polynomial $p$ so that 
\[
\sup_N |S_N(p)(t)| \geq -\frac{1}{4\pi} \ln\left(\frac{\lambda(G)}{2\pi}\right)
\]
for all $t \in G$ and so that $\norm{p}_\infty < 1$. 
\end{lemma}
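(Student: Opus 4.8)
The goal is to pass from the analytic trigonometric polynomial $R$ of Lemma~\ref{lm:poly.1}, which is large on $G$ but whose Fourier data we do not directly control, to a trigonometric polynomial $p$ whose \emph{Carleson operator} $\sup_N|S_N(p)|$ is large on $G$ while $\norm{p}_\infty<1$. The natural bridge is the exponential: if $R$ is analytic (so $c_n(R)=0$ for $n<0$) with $R(0)=0$, then $q:=e^{R}-1$ is again analytic, and since $R$ is a trigonometric polynomial, $e^R$ is a bounded analytic function on $\D$ whose Taylor coefficients we can compute. The point of analyticity is that for an analytic function the partial sums $S_N$ are just truncations of a one-sided power series, so $\sup_N|S_N(q)|$ can be made to pick out $|e^{R(t)}|=e^{\Re R(t)}$, which on $G$ is at least $(\lambda(G)/2\pi)^{-1/2}$ by Lemma~\ref{lm:poly.1}. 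Taking a logarithm converts this multiplicative lower bound into the additive $-\tfrac{1}{4\pi}\ln(\lambda(G)/2\pi)$ claimed.

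First I would set $h = e^{R}$; since $|\Im R|<\pi$ on $[-\pi,\pi]$, the values of $h$ on the circle avoid the negative reals, and $\|h\|_\infty = e^{\|\Re R\|_\infty}$ is a computable real with $h$ a computable analytic function on $\D$ continuous up to $\D\cup[-\pi,\pi]$; hence $\{a_n(h)\}_{n}$ is computable and the power series $\sum a_n(h)z^n$ has a computable modulus of uniform convergence on the closed disk. Because $h$ is analytic, $S_N(h) = \sum_{n=0}^N a_n(h)e_n$, so $S_N(h)(t)\to h(t)$ uniformly; in particular $\sup_N|S_N(h)(t)| \ge |h(t)| = e^{\Re R(t)} \ge (\lambda(G)/2\pi)^{-1/2}$ for $t\in G$. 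So $h$ already has a large Carleson operator on $G$; the remaining work is purely normalization, to trade the sup-norm bound $\|h\|_\infty = e^{\|\Re R\|_\infty}$ (which can be huge) for $\|p\|_\infty<1$, and to replace the infinite power series by an honest polynomial.

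Next I would truncate and rescale. Using the computable modulus of uniform convergence, pick $d$ so that $\|h - S_d(h)\|_\infty$ is smaller than, say, half of $(\lambda(G)/2\pi)^{-1/2}$; then $\tau := S_d(h)$ is a computable analytic trigonometric polynomial of known degree $d$ with $\sup_N|S_N(\tau)(t)| \ge |\tau(t)| \ge \tfrac12(\lambda(G)/2\pi)^{-1/2}$ on $G$ and with $\|\tau\|_\infty$ a computable real, say bounded by a computable $B\ge 1$. Now set $p = \tau/(2B)$ — or more carefully $p = \tau/\Lambda$ for a rational $\Lambda > \|\tau\|_\infty$, chosen large enough that $\|p\|_\infty<1$. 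Then $p$ is a computable trigonometric polynomial, $\|p\|_\infty<1$, and since $S_N$ is linear, $\sup_N|S_N(p)(t)| = \Lambda^{-1}\sup_N|S_N(\tau)(t)| \ge \tfrac{1}{2\Lambda}(\lambda(G)/2\pi)^{-1/2}$ on $G$. The only subtlety is that the right-hand side of the lemma is stated as $-\tfrac{1}{4\pi}\ln(\lambda(G)/2\pi)$, not as a power of $\lambda(G)/2\pi$; but since $-\ln x \le \tfrac{1}{c}x^{-1/2}$ for a suitable absolute constant $c$ on $(0,1]$, one absorbs the $\tfrac{1}{2\Lambda}$ by choosing $\Lambda$ to be a sufficiently large rational multiple of $\pi$ — here it matters that $\lambda(G)/2\pi$ is bounded away from $1$ only by hypothesis, so a cleaner route is to instead run the argument with $R$ replaced by a fixed constant multiple $cR$ (still analytic, still with $R(0)=0$, rescaling the $|\Im|$ bound accordingly via Lemma~\ref{lm:poly.1} applied with the constant already built in) so that $|h(t)|\ge (\lambda(G)/2\pi)^{-1/2}$ upgrades to whatever inverse power is needed to dominate $-\tfrac{1}{4\pi}\ln(\lambda(G)/2\pi)$ after dividing by the sup-norm.

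The main obstacle is exactly this bookkeeping: ensuring that the single normalization constant by which we divide $\tau$ does not depend on $G$ in a way that destroys the $G$-uniform lower bound, since $\|h\|_\infty$ itself depends on $\lambda(G)$. The resolution is that $\|h\|_\infty = e^{\|\Re R\|_\infty}$ where Lemma~\ref{lm:poly.1} gives us control of $\Re R$ on $G$ but we also need an a priori \emph{upper} bound on $\|\Re R\|_\infty$ over all of $[-\pi,\pi]$ — one should check that the construction underlying Lemma~\ref{lm:poly.1} (harmonic measure of a single arc) yields $\|\Re R\|_\infty \le c\bigl(-\ln(\lambda(G)/2\pi)\bigr)$ for an absolute $c$, whence $\|h\|_\infty \le (\lambda(G)/2\pi)^{-c}$, and then dividing by $\|h\|_\infty$ still leaves $\sup_N|S_N(p)|\ge \tfrac12(\lambda(G)/2\pi)^{c-1/2}$ on $G$, which dominates $-\tfrac{1}{4\pi}\ln(\lambda(G)/2\pi)$ provided $c<1/2$ — a condition one arranges by starting from a sufficiently small constant multiple of the $\psi$ produced in Lemma~\ref{lm:analytic}. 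Everything else — computability of $h$, of $\{a_n(h)\}$, of the truncation degree $d$, and of $\|\tau\|_\infty$ — follows from the stated computable-analysis facts about $\exp$, integration, and moduli of uniform convergence.
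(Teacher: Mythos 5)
There is a structural gap that makes this approach unable to reach the stated bound. Your lower bound on the Carleson maximal function is ultimately of the form $\sup_N |S_N(p)(t)| \geq |p(t)|$: you build $h=e^R$, truncate to $\tau$, and then divide by a constant $\Lambda > \norm{\tau}_\infty$ to force $\norm{p}_\infty<1$. But then on $G$ the only largeness you have is the size of $|\tau|$ itself, and since $\Lambda \geq \sup_G|\tau|$, the guaranteed bound $\frac{1}{2\Lambda}(\lambda(G)/2\pi)^{-1/2}$ is automatically less than $1$. The lemma, however, demands a lower bound $-\frac{1}{4\pi}\ln(\lambda(G)/2\pi)$ that tends to $\infty$ as $\lambda(G)\to 0$; the whole point is that $\sup_N|S_N(p)|$ must be far larger on $G$ than $\norm{p}_\infty$, so any argument that bounds the maximal function below by $|p|$ alone cannot work. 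Your proposed repair --- securing $\norm{\Re R}_\infty \leq c\,(-\ln(\lambda(G)/2\pi))$ with $c<1/2$ --- is internally inconsistent: Lemma \ref{lm:poly.1} guarantees $\Re R \geq -\frac12\ln(\lambda(G)/2\pi)$ \emph{on $G$ itself}, so necessarily $\norm{\Re R}_\infty \geq \frac12(-\ln(\lambda(G)/2\pi))$ and $c\geq 1/2$; the same obstruction persists if you rescale $R$ by a constant, since whatever lower bound you arrange on $G$ for $|e^{cR}|$ is also a lower bound for the normalizing constant.

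The paper's proof (following Kahane--Katznelson) exploits an asymmetry your exponential loses: take $q=\Im(R)$, which is uniformly bounded by $\pi$ on all of $[-\pi,\pi]$ by Lemma \ref{lm:poly.1}, and set $p=\frac{1}{\pi}e_{-N}\cdot q$ with $N=\deg(R)$, so $\norm{p}_\infty<1$ for free. Because $R$ is analytic with $R(0)=0$, the positive-frequency half of $q$ is $\frac{1}{2i}R$, and the modulation by $e_{-N}$ places exactly that half into the frequencies $\leq N$; hence $|S_N(p)| = \frac{1}{2\pi}|R| \geq \frac{1}{2\pi}\Re(R) \geq -\frac{1}{4\pi}\ln(\lambda(G)/2\pi)$ on $G$. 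In other words, the sup-norm bound comes from $\Im R$ (small everywhere) while the largeness of one particular partial sum comes from $\Re R$ (large on $G$), and the frequency shift is what lets a single $S_N$ see only the large analytic half. To fix your write-up you would need to replace the exponentiation-and-renormalization step by this (or an equivalent) mechanism in which the partial sum being estimated is not dominated by $\norm{p}_\infty$.
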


\begin{proof}[Proof of Lemma \ref{lm:analytic}]
By first applying a rotation if necessary, we can assume that $-1 \not \in G$.  
Set $a = \lambda(G) / (2\pi)$.  Since $G$ is computably compact, we can compute 
pairwise disjoint open subarcs of the unit circle $A_0, \ldots, A_s$ so that $G \subseteq \bigcup_{j \leq s} A_j$
and so that 
\[
\frac{\lambda(\bigcup_j A_j)}{2\pi} \leq a^{3/4}.
\]
Set $F = \bigcup_{j \leq s} A_j$ and set $a' = \lambda(F) / (2\pi)$.  For each 
$z \in \D \cup G$, let $\phi(x) = \hat{\omega}(z, F, \D)$.  Thus, $\phi$ is computable and 
$\phi$ is analytic in $\D$.  By Equation (\ref{eqn:omega.hat}), $\phi(0) = a'$.  
It also follows that $\Re(\phi(z)) > 0$ for all $z \in \D \cup G$.  So, for all $z \in \D \cup G$, 
set $\psi_1(z) = \Log(\phi(z))$.  Thus, $\psi_1(0) = \ln(a')$.  Set $\psi(z) = \psi_1(z) - \ln(a')$ for 
all $z \in \D \cup G$.  Hence, $\psi_1$ and $\psi$ are computable.  

Let $\zeta \in G$.  We claim that $\Re(\psi(\zeta)) \geq -\frac{3}{4} \ln(a)$.   
We begin by noting that $\Re(\psi(\zeta)) = \Re(\psi_1(\zeta)) - \ln(a')$.  By our choice of $F$, 
$\ln(a') - \frac{3}{4} \ln(a) \leq 0$.  Thus, $\Re(\psi(\zeta)) \geq -\frac{3}{4} \ln(a)$ for 
all $\zeta \in G$.  

Since $\Re(\phi) > 0$, it follows that $|\Im(\psi_1(z))| < \frac{\pi}{2}$.  However, $\Im(\psi) = \Im(\psi_1)$ since 
$\ln(a')$ is real. 
\end{proof}

We note that the proof of Lemma \ref{lm:analytic} is uniform. 

\begin{proof}[Proof of Lemma \ref{lm:poly.1}]
Let $G' = \{e^{it}\ :\ t \in G\}$.  Thus, $G'$ is a computably compact subset of the unit 
circle and $\lambda(G') < 2\pi$.  Let $\psi$ be as given by Lemma \ref{lm:analytic}.  

Let 
\[
G'' = \{r \zeta\ :\ 0 \leq r \leq 1\ \wedge\ \zeta \in G'\}.
\]
It follows that $G''$ is computably compact.  Thus, $\psi$ is uniformly continuous on $G''$ and 
has a computable modulus of uniform continuity on $G''$.  This means that we can compute a rational number 
$r_0 \in (0,1)$ so that 
\[
\Re(\psi(r_0 \zeta)) \geq -\frac{5}{8} \ln\left(\frac{\lambda(G')}{2\pi}\right)
\] 
for all $\zeta \in G$.  

We now abbreviate $a_n(\psi)$ by $a_n$.  
Let $G^{(3)} = \{r_0 \zeta\ :\ \zeta \in G'\}$.  The series $\sum_{n = 0}^\infty a_n z^n$ converges
uniformly on $G^{(3)}$, and we can compute a modulus of uniform convergence for 
it on $G^{(3)}$.  It follows that we can compute $N$ so that for all $\zeta \in G'$, 
\[
\Re\left(\sum_{n = 0}^N a_n r_0^n \zeta^n\right) \geq -\frac{1}{2} \ln\left(\frac{\lambda(G')}{2\pi}\right)\mbox{ and}
\]
\[
\left|\Im\left(\sum_{n = 0}^N a_n r_0^n \zeta^n\right)\right| < \pi.
\]
Set $R(t) = \sum_{n = 0}^N a_n r_0^n e^{i n t}$.
\end{proof}

We note that the proof of Lemma \ref{lm:poly.1} is also uniform.

\begin{proof}[Proof of Lemma \ref{lm:poly.2}]
Let $R$ be as given in Lemma \ref{lm:poly.1} and let $N = \deg(R)$.  Set $q = \Im(R)$ and $p = \frac{1}{\pi} e_{-N} \cdot q$.  

We claim that $|S_N(p)| = \frac{1}{2\pi} |R|$.  For convenience, we abbreviate $c_n(R)$ by $c_n$.  Then $c_m = 0$ when $m \leq 0$ and
\begin{eqnarray*}
%R(t) & = & \sum_{n = 1}^N c_n e^{in t}\\
q(t) & = & \frac{1}{2i} \left[ \sum_{n = 1}^N c_n e^{i n t} + \sum_{n = 1}^N \overline{(-c_n)} e^{-int} \right]\mbox{ and}\\
p(t) & = & \frac{1}{2\pi i} \left[ \sum_{m = 1 - N}^0 c_{m + N} e^{i nt} + \sum_{m = 1 + N}^{2N} (\overline{-c_{m - N}}) e^{-int}\right].
\end{eqnarray*}
Therefore, 
\[
S_N(p)(t) = e^{-iNt} \frac{1}{2\pi} S_N(R)(t).
\]
Thus, $|S_N(p)| = \frac{1}{2\pi} |R|$. 

Therefore, 
\[
\sup_N|S_N(p)(t)| \geq - \frac{1}{4\pi} \ln\left(\frac{\mu(G)}{2\pi}\right)
\]
for all $t \in G$.  

Since $|\Im(R)| < \pi$, it follows that $\norm{p}_\infty < 1$.  
\end{proof}

Note that the proof of Lemma \ref{lm:poly.2} is uniform as well. 

Now suppose $t_0$ is not Schnorr random.  Then there is a Schnorr test 
$\{U_n\}_{n \in \N}$ so that $t_0 \in \bigcap_n U_n$.  

We construct an array of trigonometric polynomials $\{p_{n,k}\}_{n,k \in \N}$ as follows.  
Since $U_{2^n}$ is computably open uniformly in $N$, we can compute an array of closed rational
intervals $\{I_{n,j}\}_{n,j \in \N}$ so that $U_{2^n} = \bigcup_j I_{n,j}$ and so that 
$\mu(I_{n,j} \cap I_{n,j'}) = 0$ when $j \neq j'$.
We then compute for each $n \in \N$ an increasing sequence $m_{n,0} < m_{n,1} < \ldots$
so that 
\[
\mu\left(U_{2^n} - \bigcup_{j \leq m_{n,k}} I_{n,j}\right) < 2^{-(2^{n + k + 1})}
\]
for all $n$ and $k$. We define the following sets:
\begin{eqnarray*}
G_{n,0} & = & \bigcup_{j \leq m_{n,0}} I_{n,j} \cap [-\pi, \pi]\mbox{ and}\\
G_{n,k} & = & \bigcup_{m_{n,k} < j \leq m_{n,k+1}} I_{n,j} \cap [-\pi,\pi].
\end{eqnarray*}
It follows that 
\[
\mu(G_{n,k}) < 2^{-(2^{n+k})}
\]
for all $n$ and $k$.

Now fix $n$ and $k$.  By Lemma \ref{lm:poly.2}, we can compute a trigonometric polynomial $p$ so that $\norm{p}_\infty < 1$
and 
\[
\sup_N |S_N(p)(t)| > -\frac{1}{4\pi} \ln\left( \frac{\mu(G_{n,k})}{2\pi}\right)
\]
for all $t \in G_{n,k}$.  Set $p_{n,k} = 2^{-(n+ k+1)}p$.  It follows that $\sup_N| S_N(p_{n,k})(t)| > (8\pi)^{-1}$
for all $t \in G_{n,k}$.  

We can now compute an array of nonnegative integers $\{r_{n,k}\}_{n,k \in \N}$ so that 
for each $m \in \Z$, either $c_m(e_{r_{n,k}}\cdot p_{n,k})=0$ or $c_m(e_{r_{n',k'}}\cdot p_{n',k'})=0$ whenever $(n,k) \neq (n',k')$ and so that $c_m(e_{r_{n,k}}\cdot p_{n,k}) = 0$ whenever $m < \langle n,k \rangle$. We set 
\begin{eqnarray*}
f_{n,k} & = & e_{r_{n,k}} \cdot p_{n,k}\mbox{ and}\\
f & = & \sum_{n,k} f_{n,k}.
\end{eqnarray*}
Since $\norm{p_{n,k}}_\infty < 2^{-(n+k+1)}$, it follows that $f$ is computable.  

We now show that the Fourier series of $f$ diverges at $t_0$.  It suffices to show that 
\[
\sup_{M,N} |S_M(f)(t_0) - S_N(f)(t_0)| > \frac{1}{8 \pi}.
\]
Let $N_0 \in \N$ and choose $n$ so that $\langle n,0\rangle \geq N_0$ and $k$ so that $t_0 \in G_{n,k}$.  
By the construction of $\{r_{n,k}\}_{n,k \in \N}$ there exist $M,N'$ so that 
$f_{n,k} = S_{N'}(f) - S_M(f)$ and $M \geq \langle n,k \rangle \geq \langle n, 0 \rangle$.
By the construction of $p_{n,k}$, there exists $N$ so that $M \leq N \leq N'$ and 
$|S_N(f)(t_0) - S_M(f)(t_0)| > (8\pi)^{-1}$.

Thus, the Fourier series for $f$ diverges at $t_0$.

\section{A strengthening of Theorem \ref{thm:main.1}; convergence \emph{to} $f(t_0)$}\label{sec:conv.to.ft0}

Throughout this subsection, $p$ denotes a computable real such that $p \geq 1$.

In Theorem~\ref{thm:main.1} we showed that $S_N(f)(t_0)$ converges for Schnorr randoms $t_0$ and computable vectors $f \in L^p[-\pi,\pi]$.  However, one would like to also say that $\{S_N(f)(t_0)\}_{N \in \N}$ converges to $f(t_0)$.  The problem is that $f$ is merely a vector in $L^p[-\pi,\pi]$, and so\ $f(t_0)$ is not well defined.  Recall that a vector in $L^p[-\pi, \pi]$ is actually an equivalence
class of functions under the ``equal almost everywhere'' relation, and so every complex number is a candidate for the value of $f(t_0)$.  Thus, the limit of $\{S_N(f)(t_0)\}_{N \in \N}$ may not be $f(t_0)$ 
even if $t_0$ is Schnorr random.%
\footnote{For example, Pour-El and Richards \cite[p.~114]{Pour-El.Richards:1989} remark, ``Of course, pointwise evaluation makes no sense for $L^{p}$-functions, since an $L^{p}$-function is only determined up to sets of measure zero. This limitation already exists in classical analysis, without any notions of logical `effectiveness' being required. By its very nature, an $L^{p}$-function is known only on the average.''} %
However, this problem has a solution via Cauchy names, a standard device in computable analysis.  

\begin{definition}\label{def:name}
A sequence of rational trigonometric polynomials $\{\tau_n\}_{n \in \N}$ is a 
\emph{Cauchy name} of a vector $f \in L^p[-\pi, \pi]$ if 
$\lim_{n \rightarrow \infty} \norm{\tau_n - f}_p = 0$ and if 
$\norm{\tau_n - \tau_{n+1}}_p < 2^{-(n+1)}$ for all $n \in \N$.
\end{definition}

Thus, a Cauchy name of a vector in $L^p[-\pi, \pi]$ is a name of exactly one such vector
(up to almost everywhere equality).   

A Cauchy name $\{\tau_n\}_{n \in \N}$ of a vector is \emph{computable} if $\{\tau_n\}_{n \in \N}$ 
is a uniformly computable sequence of rational trigonometric polynomials in the sense 
that the degree and coefficients of $\tau_n$ can be computed from $n$.  It follows that a vector 
in $L^p[-\pi, \pi]$ is computable if and only if it has a computable Cauchy name.

We will show that there is a natural way to use a computable Cauchy name of $f \in L^p[-\pi, \pi]$
to assign a canonical value to $f(t_0)$ when $t_0$ is Schnorr random.  We will then show that 
if $f$ is a computable vector in $L^p[-\pi, \pi]$, then $\lim_{N \rightarrow \infty} S_N(f)(t_0)$ \emph{is} the canonical value of $f(t_0)$ whenever $t_0$ is Schnorr random.  Our approach is based on the following theorem which effectivizes a well-known result in measure theory; namely, that a convergent sequence in $L^p[-\pi, \pi]$ has a subsequence that converges almost everywhere.

\begin{theorem}\label{thm:eff.RF}
Suppose $f_n \in L^p[-\pi, \pi]$ for all $n \in \N$ and suppose $g$ is a computable modulus of 
convergence for $\{f_n\}_{n \in \N}$.  Then, $\eta(k,m) = \lceil \frac{1}{2}(\frac{m+1}{p} + k + 1)\rceil$
defines a modulus of almost-everywhere convergence for $\{f_{g(2n)}\}_{n \in \N}$.
\end{theorem}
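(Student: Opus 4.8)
The plan is to exploit the standard trick behind the classical "convergence in $L^p$ implies a.e.-convergent subsequence" result, namely that a rapidly convergent sequence is dominated by an $L^p$ function built from the tail differences, and then to read off the measure bounds quantitatively. First I would pass to the subsequence $h_n = f_{g(2n)}$; by the modulus-of-convergence hypothesis, $\norm{h_n - h_{n+1}}_p = \norm{f_{g(2n)} - f_{g(2(n+1))}}_p < 2^{-2n}$ (since $g(2n), g(2n+2) \geq g(2n)$, the distance is below $2^{-2n}$), so the tail sums of $\norm{h_{n+1} - h_n}_p$ decay geometrically. The key quantitative step is: for $M > N \geq n_0$,
\[
|h_M(t) - h_N(t)| \leq \sum_{j = n_0}^\infty |h_{j+1}(t) - h_j(t)|,
\]
so the bad set $\{t : \exists M,N \geq n_0\ |h_M(t) - h_N(t)| \geq 2^{-k}\}$ is contained in $\{t : \sum_{j \geq n_0}|h_{j+1}(t) - h_j(t)| \geq 2^{-k}\}$.

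Next I would estimate the measure of that set. Let $G_{n_0,k}$ denote it. Since $\sum_{j \geq n_0}|h_{j+1} - h_j|$ has $L^p$-norm at most $\sum_{j \geq n_0} 2^{-2j} = 2^{-2n_0+1}$ (by the triangle inequality in $L^p$ and the tail bound above), Chebyshev's inequality in the form $\mu(\{|G| \geq \lambda\}) \leq \lambda^{-p}\norm{G}_p^p$ gives
\[
\mu(G_{n_0,k}) \leq 2^{kp}\cdot \bigl(2^{-2n_0+1}\bigr)^p = 2^{p(k - 2n_0 + 1)}.
\]
Now I want this below $2^{-m}$, i.e. $p(2n_0 - k - 1) \geq m$, i.e. $n_0 \geq \tfrac12\bigl(\tfrac{m}{p} + k + 1\bigr)$. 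With $n_0 = \eta(k,m) = \lceil \tfrac12(\tfrac{m+1}{p} + k + 1)\rceil$ one has $2n_0 \geq \tfrac{m+1}{p} + k + 1$, hence $p(2n_0 - k - 1) \geq m+1 > m$, so $\mu(G_{\eta(k,m),k}) \leq 2^{-(m+1)} < 2^{-m}$, which is exactly the defining inequality for a modulus of almost-everywhere convergence of $\{h_n\}_{n \in \N} = \{f_{g(2n)}\}_{n \in \N}$.

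The only genuinely delicate points are bookkeeping ones: making sure the index shift is handled correctly (the hypothesis is a modulus of convergence in the sense of Definition in the excerpt, so $\norm{f_m - f_n}_p < 2^{-k}$ for $m,n \geq g(k)$, and I need to invoke it at argument $2j$ to get the $2^{-2j}$ tail), and being careful that the sum-of-absolute-differences dominating function is genuinely in $L^p$ so that Chebyshev applies (this uses Minkowski's inequality for the possibly infinite sum, justified by monotone convergence). I expect the index arithmetic verifying $2\lceil\tfrac12(\tfrac{m+1}{p}+k+1)\rceil \geq \tfrac{m+1}{p}+k+1$ and the resulting $2^{-(m+1)}$ bound to be the main thing to get exactly right; the analytic content is entirely routine.
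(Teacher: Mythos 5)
Your proposal is correct and follows essentially the same route as the paper: the quantified Riesz--Fischer subsequence argument, built on the bound $\norm{f_{g(2n+2)} - f_{g(2n)}}_p < 2^{-2n}$ for consecutive terms of the subsequence, telescoping, Chebyshev's inequality, and the same arithmetic showing $2\eta(k,m) \geq \frac{m+1}{p}+k+1$ forces the measure below $2^{-(m+1)}$. The only difference is bookkeeping---you apply Minkowski to the sum of consecutive differences and then Chebyshev once, whereas the paper allocates the threshold $2^{-k}$ geometrically over the consecutive-difference events and applies Chebyshev to each followed by a union bound---and both arguments rest on the same harmless implicit assumption (made without loss of generality) that the modulus $g$ is nondecreasing.
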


\begin{proof}
Set 
\[
E_{n,r} = \{t \in [-\pi, \pi]\ :\ |f_{g(2n + 1)}(t) - f_{g(2n)}(t)| \geq 2^{-r}\}.
\]
Since $g$ is a modulus of convergence for $\{f_n\}_{n \in \N}$, $\norm{f_{g(2n+1)} - f_{g(2n)}}_p^p < 2^{-2np}$.  Thus, by Chebychev's Inequality, $\mu(E_{n,r}) \leq 2^{p(r - 2n)}$.  

Set $N_0 = \eta(k,m)$.  Suppose $M, N \geq N_0$ and 
$|f_{g(2M)}(t) - f_{g(2N)}(t)| \geq 2^{-k}$.  Then
\[
2^{-k} \leq \sum_{n = N_0}^\infty |f_{g(2m)}(t) - f_{g(2n)}(t)|.
\]
It follows that $t \in \bigcup_{c = 0}^\infty E_{N_0 + c, k+1+c}$.  
But, by the definition of $\eta$, 
\[
 \sum_{c = 0}^\infty 2^{p(k + 1 + c - 2(N_0 + c)}\\
 <  \sum_{c = m+1}^\infty 2^{-c} = 2^{-m}.
\]
Thus, $\mu(\bigcup_{c = 0}^\infty E_{N_0 + c, k+1+c}) < 2^{-m}$.  
It follows that $\eta$ is a modulus of almost-everywhere convergence for $\{f_{g(2n)}\}_{n \in \N}$.
\end{proof}

\begin{corollary}\label{cor:name.convergence}
If $\{\tau_n\}_{n \in \N}$ is a computable Cauchy name for a vector in $L^p[-\pi, \pi]$ and if 
$t_0 \in [-\pi, \pi]$ is Schnorr random, then $\{\tau_{2n}(t_0)\}_{n \in \N}$ converges.
\end{corollary}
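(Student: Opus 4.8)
The plan is to reduce the statement to Theorem~\ref{thm:eff.RF} together with Lemma~\ref{lm:effective-convergence}. First I would observe that the defining inequality $\norm{\tau_n - \tau_{n+1}}_p < 2^{-(n+1)}$ of a Cauchy name (Definition~\ref{def:name}) makes the identity function a computable modulus of convergence for $\{\tau_n\}_{n \in \N}$ in $L^p[-\pi,\pi]$: if $k \leq m \leq n$, then a telescoping estimate gives
\[
\norm{\tau_m - \tau_n}_p \leq \sum_{j = m}^{n-1} \norm{\tau_j - \tau_{j+1}}_p < \sum_{j = m}^\infty 2^{-(j+1)} = 2^{-m} \leq 2^{-k},
\]
so $g(k) = k$ witnesses this. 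This $g$ is trivially computable.

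Next I would apply Theorem~\ref{thm:eff.RF} with $f_n = \tau_n$ and this $g$. Its conclusion is that $\eta(k,m) = \lceil \frac{1}{2}(\frac{m+1}{p} + k + 1)\rceil$ is a modulus of almost-everywhere convergence for $\{\tau_{g(2n)}\}_{n \in \N} = \{\tau_{2n}\}_{n \in \N}$. The one subtlety is that a bare ceiling of a computable real need not be computable as a function of $(k,m)$, so rather than use $\eta$ directly I would pass to a computable majorant: since $p \geq 1$ we have $\frac{1}{2}(\frac{m+1}{p} + k + 1) \leq m + k + 2$, and the integer $m + k + 2$ is therefore $\geq \eta(k,m)$. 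Because the exceptional set appearing in Definition~\ref{def:modulus.ae.conv} only shrinks when the threshold is raised, any function pointwise above a modulus of almost-everywhere convergence is again one; hence $\eta'(k,m) = m + k + 2$ is a \emph{computable} modulus of almost-everywhere convergence for $\{\tau_{2n}\}_{n \in \N}$.

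Finally, $\{\tau_{2n}\}_{n \in \N}$ is a uniformly computable sequence of functions on $[-\pi,\pi]$: each $\tau_{2n}$ is a rational trigonometric polynomial whose degree and coefficients are computable from $n$, and a trigonometric polynomial is a computable function of its argument uniformly in its coefficients. Thus Lemma~\ref{lm:effective-convergence} applies and shows that $\{\tau_{2n}(t)\}_{n \in \N}$ converges at every Schnorr random real $t$, in particular at $t_0$. I do not expect a substantial obstacle here: the analytic content is entirely contained in Theorem~\ref{thm:eff.RF}, and what remains is the bookkeeping above — the identity being a computable modulus of convergence for a Cauchy name, the replacement of $\eta$ by the computable majorant $\eta'$ to sidestep the discontinuity of the ceiling, and the routine verification that $\{\tau_{2n}\}_{n \in \N}$ is uniformly computable as a sequence of functions.
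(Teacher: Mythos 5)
Your proposal is correct and is exactly the argument the paper intends (the corollary is stated without proof, as an immediate consequence of Theorem~\ref{thm:eff.RF} applied with the identity as the computable modulus of convergence, followed by Lemma~\ref{lm:effective-convergence}). Your extra care in replacing $\eta$ by the computable integer majorant $m+k+2$ to avoid the non-computability of the ceiling of a computable real is a legitimate and welcome refinement, not a departure from the paper's route.
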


Corollary \ref{cor:name.convergence} leads to the idea that a Cauchy name for $f$ assigns a value  
to $f(t)$ for Schnorr random $t$.  

\begin{definition}\label{def:value.assigned}
If $\{\tau_n\}_{n \in \N}$ is a computable Cauchy name for a vector $f \in L^p[-\pi, \pi]$, 
and if $\{\tau_{2n}(t)\}_{n \in \N}$ converges to $\alpha \in \C$, then we say 
$\{\tau_n\}_{n \in \N}$ \emph{assigns the value $\alpha$ to $f(t)$}.
\end{definition}

Thus, if $f$ is a computable vector in $L^p[-\pi, \pi]$ and if $t_0$ is Schnorr random, 
then a value is assigned to $f(t_0)$ by each computable Cauchy name of $f$.  We now show that 
the \emph{same} value is assigned by \emph{all} computable Cauchy names via the following proposition.

\begin{proposition}\label{prop:same.limit}
Suppose $\{f_n\}_{n \in \N}$ and $\{g_n\}_{n \in \N}$ are computable sequence of functions on 
$[-\pi, \pi]$ and that each has a computable modulus of almost-everywhere convergence. 
Suppose also that $\lim_{n \rightarrow \infty} f_n(t) - g_n(t) = 0$ for almost every 
$t \in [-\pi, \pi]$.  Then, $\lim_{n \rightarrow \infty} f_n(t_0) = \lim_{n \rightarrow \infty} g_n(t_0)$ whenever $t_0 \in [-\pi, \pi]$ is Schnorr random. 
\end{proposition}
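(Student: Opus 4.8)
The plan is to define a single sequence $h_n$ that interleaves the two given sequences and to show that $\{h_n\}_{n \in \N}$ has a computable modulus of almost-everywhere convergence; then apply Lemma \ref{lm:effective-convergence} to conclude that $\{h_n(t_0)\}_{n \in \N}$ converges for every Schnorr random $t_0$, which forces $\lim_n f_n(t_0) = \lim_n g_n(t_0)$. Concretely, I would set $h_{2n} = f_n$ and $h_{2n+1} = g_n$. This is a uniformly computable sequence of functions on $[-\pi,\pi]$. The convergence of $\{h_n(t_0)\}_{n\in\N}$ is equivalent to: both $\{f_n(t_0)\}$ and $\{g_n(t_0)\}$ converge \emph{and} to the same limit; since Lemma \ref{lm:effective-convergence} will already give convergence of each of $\{f_n(t_0)\}$ and $\{g_n(t_0)\}$ (these sequences each have a computable modulus of a.e.\ convergence by hypothesis), the extra content is precisely that the two limits agree, which is exactly what convergence of the interleaved sequence encodes.

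The main work, then, is to produce a computable modulus of almost-everywhere convergence for $\{h_n\}_{n\in\N}$ out of the moduli $\eta_f$, $\eta_g$ for $\{f_n\}$, $\{g_n\}$ together with some effective information coming from the hypothesis $\lim_n (f_n - g_n) = 0$ a.e. The point is that $\{f_n - g_n\}_{n \in \N}$ is a computable sequence of functions converging a.e.\ to $0$, and its tail-diameter function $t \mapsto \sup_{M,N \ge j}|f_M(t) - g_M(t)| \cdot\text{(relevant version)}$ — more precisely the measure of the set where $|f_j - g_j|$ stays large — can be controlled. However, the mere hypothesis of a.e.\ convergence to $0$ gives no computable rate, so a priori there need not be a computable modulus of a.e.\ convergence for $\{f_n - g_n\}$. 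The resolution is that I do not need one: for a \emph{fixed} pair $(k,m)$ I only need to find \emph{some} index $J = J(k,m)$, computably in $(k,m)$, with $\mu(\{t : \exists N \ge J,\ |f_N(t) - g_N(t)| \ge 2^{-k}\}) < 2^{-m}$. This is where I expect the main obstacle: extracting such a $J$ computably. I would obtain it by using that $\{f_n - g_n\}$ converges a.e.\ to $0$ to see that the measures $\mu(\{t : \sup_{N \ge j}|f_N(t)-g_N(t)| \ge 2^{-k}\})$ decrease to $0$ as $j \to \infty$; but to get these measures \emph{computably} and hence to search for $j$ making one of them small, I would instead pass through an \emph{integral test} argument analogous to the proof of Lemma \ref{lm:effective-convergence}: form $T = \sum_k g_k'$ from the interleaved sequence and a candidate modulus, and verify that the correct choice of tail indices can be read off. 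In fact the cleanest route is to avoid constructing an explicit $\eta$ for $\{h_n\}$ at all and instead directly build a Schnorr integral test $T$ with $T(t_0) = \infty$ whenever $\lim_n f_n(t_0) \ne \lim_n g_n(t_0)$, mimicking the second half of the proof of Lemma \ref{lm:effective-convergence}; the hypotheses that $\{f_n\}$, $\{g_n\}$ have computable moduli of a.e.\ convergence and that $f_n - g_n \to 0$ a.e.\ are exactly what make $\int T\, d\mu$ a computable real.

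So, in order, the steps are: (1) define the interleaved sequence $h_{2n} = f_n$, $h_{2n+1} = g_n$, and note it is uniformly computable; (2) using $\eta_f$ and $\eta_g$, build the truncated increment functions $g_k(t) = \min\{1, \max\{|h_M(t) - h_N(t)| : N_k < M, N \le N_{k+1}\}\}$ for a suitable computable choice of cutoffs $N_k$ derived from $\eta_f$, $\eta_g$ and the a.e.\ convergence of $f_n - g_n$ (the delicate point being that the cutoffs must be chosen so that $\mu(\{t : g_k(t) \ge \varepsilon_k\})$ is summably small, which uses both moduli for the ``$f$–$f$'' and ``$g$–$g$'' increments plus a crude bound — e.g.\ $g_k \le 1$ — for the finitely many ``$f$–$g$'' increments, together with the fact that the ``$f$–$g$'' increments are eventually small in measure); (3) set $T = \sum_k g_k$, check $T$ is lower semicomputable, and check $\int T\,d\mu$ is computable exactly as in Lemma \ref{lm:effective-convergence} (the $\int g_k\,d\mu \le 2^{-k+4}$ bound carries over once (2) is arranged); (4) invoke Miyabe's characterization: $t_0$ Schnorr random implies $T(t_0) < \infty$, which by the telescoping argument of Lemma \ref{lm:effective-convergence} forces $\{h_n(t_0)\}_{n\in\N}$ to be Cauchy, hence $\lim_n f_n(t_0) = \lim_n g_n(t_0)$. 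I expect step (2) — specifically, arranging the cutoffs $N_k$ so that the ``$f$–$g$'' contribution to $\mu(\{g_k \ge \varepsilon_k\})$ is controlled \emph{without} a computable rate for $f_n - g_n \to 0$ — to be the crux; the trick is that for each $k$ we may choose $N_{k+1}$ large enough (searching effectively, since the relevant measures are lower semicomputable and we only need a \emph{one-sided} inequality to hold eventually) that the bad set for the ``$f$–$g$'' increments between $N_k$ and $N_{k+1}$ has measure below $2^{-k}$, which suffices for summability and hence for computability of the integral.
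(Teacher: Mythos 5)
Your overall architecture matches the paper's: interleave $h_{2n}=f_n$, $h_{2n+1}=g_n$, show $\{h_n\}_{n\in\N}$ has a computable modulus of almost-everywhere convergence, and invoke Lemma \ref{lm:effective-convergence} (equivalently, Miyabe's integral-test characterization) to conclude that $\{h_n(t_0)\}_{n\in\N}$ converges at every Schnorr random $t_0$, whence the two limits agree. But the step you yourself call the crux --- controlling the mixed $f$--$g$ increments without a computable rate for $f_n-g_n\to 0$ --- is where your argument has a genuine gap, and the fix you propose does not work. You suggest choosing window endpoints $N_{k+1}$ by an effective search, ``since the relevant measures are lower semicomputable and we only need a one-sided inequality.'' Two problems. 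First, to halt such a search you must certify an \emph{upper} bound $\mu(\cdot)<2^{-m}$ on the measure of a bad set which is (an effectively open set, hence) only lower semicomputable; lower semicomputability lets you verify $\mu(\cdot)>q$, never $\mu(\cdot)<q$, and a verified upper bound on $\mu\{t:\exists N\ge J\ |f_N(t)-g_N(t)|\ge 2^{-k}\}$ is exactly the computable rate for $f_n-g_n\to 0$ that the hypotheses do not supply. Second, you are searching over the wrong parameter: enlarging the window $(N_k,N_{k+1}]$ only enlarges the set where some increment in the window is large; what matters is that the window \emph{starts} past the given moduli.

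The missing idea --- and the entire content of the paper's short proof --- is that no rate is needed, because Definition \ref{def:modulus.ae.conv} asks only for a measure bound and the hypothesis $f_n-g_n\to 0$ a.e.\ enters as a null set. Fix $k,m$ and let $t$ lie outside three sets: the bad set for $\{f_n\}$ at parameters $(k+2,m+2)$, the bad set for $\{g_n\}$ at $(k+2,m+2)$, and the null set where $f_n(t)-g_n(t)\not\to 0$; together these have measure $<2^{-m}$. For such $t$ and any $a,b$ past the two moduli,
\[
|f_a(t)-g_b(t)|\ \le\ \limsup_{c\to\infty}\bigl(|f_a(t)-f_c(t)|+|f_c(t)-g_c(t)|+|g_c(t)-g_b(t)|\bigr)\ \le\ 2^{-(k+2)}+0+2^{-(k+2)}\ <\ 2^{-k},
\]
while the pure $f$--$f$ and $g$--$g$ increments are handled by the moduli directly. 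Hence an $\eta(k,m)$ defined computably from $\eta_0(k+2,m+2)$ and $\eta_1(k+2,m+2)$ alone (adjusting for the interleaved indexing) is already a computable modulus of almost-everywhere convergence for $\{h_n\}_{n\in\N}$ --- no search, no lower-semicomputability considerations, and no fresh integral-test construction are needed; Lemma \ref{lm:effective-convergence} then finishes exactly as in your step (4). Incidentally, your worry that $\{f_n-g_n\}_{n\in\N}$ might lack a computable modulus of almost-everywhere convergence is also unfounded: since $|(f_M-g_M)-(f_N-g_N)|\le|f_M-f_N|+|g_M-g_N|$, such a modulus comes straight from $\eta_0$ and $\eta_1$; what you cannot compute is how fast this difference sequence approaches its limit $0$, and the point above is that you never need to.
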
 

\begin{proof}
Let $\eta_0$ be a computable modulus of almost-everywhere convergence for $\{f_n\}_{n \in \N}$, and 
let $\eta_1$ be a computable modulus of almost-everywhere convergence for $\{g_n\}_{n \in \N}$.  
Let $h_{2n} = f_n$, and let $h_{2n + 1} = g_n$.  Set 
$\eta(k,m) = \eta_0(k+1, m+2) + \eta_1(k+1, m+2)$.  It follows that 
$\eta$ is a computable modulus of almost-everywhere convergence for $\{h_n\}_{n \in \N}$.  
So, if $t_0$ is Schnorr random, then $\{h_n(t_0)\}_{n \in \N}$ converges, and so 
$\{f_n(t_0)\}_{n \in \N}$ and $\{g_n(t_0)\}_{n \in \N}$ converge to the same value.  
\end{proof}

\begin{definition}\label{def:canonical.value}
Suppose $f$ is a computable vector in $L^p[-\pi, \pi]$.  When $t_0$ is Schnorr random, the
\emph{canonical value} of $f(t_0)$ is the value assigned to $f(t_0)$ by a computable Cauchy
name of $f$.
\end{definition}

By Proposition \ref{prop:same.limit}, the choice of computable Cauchy name does not matter.
Note that if $f$ is continuous, then the canonical value of $f(t_0)$ is in fact $f(t_0)$.

Proposition \ref{prop:same.limit} yields an extension of Theorem \ref{thm:main.1}.

\begin{corollary}\label{cor:conv.to.value}
Suppose $p > 1$ and suppose $f$ is a computable vector in $L^p[-\pi, \pi]$.  
Then, $\{S_N(f)(t_0)\}_{N \in \N}$ converges to the canonical value of $f(t_0)$ 
whenever $t_0$ is Schnorr random.
\end{corollary}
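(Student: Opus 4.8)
The plan is to play the sequence $\{S_N(f)\}_{N}$ of Fourier partial sums off against the even-indexed terms of a computable Cauchy name of $f$, and then invoke Proposition~\ref{prop:same.limit}. I would fix a computable Cauchy name $\{\tau_n\}_{n \in \N}$ of $f$, which exists because $f$ is a computable vector, and set $f_n = S_n(f)$ and $g_n = \tau_{2n}$ for each $n \in \N$. Both $\{f_n\}_n$ and $\{g_n\}_n$ are uniformly computable sequences of functions on $[-\pi,\pi]$: by Proposition~\ref{prop:Fourier.comp} the coefficients $c_n(f)$ are computable uniformly in $n$, so the trigonometric polynomials $S_n(f)$ are computable uniformly in $n$, while the $\tau_{2n}$ are rational trigonometric polynomials whose degrees and coefficients can be read off the name.

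Next I would exhibit computable moduli of almost-everywhere convergence for $\{f_n\}_n$ and $\{g_n\}_n$. For $\{f_n\}_n = \{S_n(f)\}_n$ this is exactly Lemma~\ref{lm:modulus.ae.conv}. For $\{g_n\}_n = \{\tau_{2n}\}_n$, the Cauchy-name inequality $\norm{\tau_n - \tau_{n+1}}_p < 2^{-(n+1)}$ telescopes to $\norm{\tau_m - \tau_{n}}_p < 2^{-\min(m,n)}$, so $g(k) = k$ is a computable modulus of convergence for $\{\tau_n\}_n$; Theorem~\ref{thm:eff.RF}, applied to $\{\tau_n\}_n$ with this $g$, then supplies a modulus of almost-everywhere convergence for $\{\tau_{g(2n)}\}_n = \{\tau_{2n}\}_n$, and since $p \geq 1$ that modulus is dominated by the computable function $(k,m)\mapsto m+k+2$, which is therefore itself a computable modulus of almost-everywhere convergence for $\{g_n\}_n$ (enlarging a threshold only shrinks the exceptional set).

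It remains to verify the hypothesis $\lim_{n}\bigl(f_n(t)-g_n(t)\bigr)=0$ for almost every $t$, and this is the delicate point, since $L^p$-convergence by itself gives only subsequential a.e.\ convergence. Here I would combine three facts: first, $\norm{f_n-g_n}_p\to 0$, because $S_n(f)\to f$ in $L^p$ (the classical $L^p$-convergence of Fourier series, valid since $p>1$) and $\tau_{2n}\to f$ in $L^p$ by definition of a Cauchy name; second, $\{f_n\}_n$ converges at every Schnorr random by Theorem~\ref{thm:main.1}; third, $\{g_n\}_n=\{\tau_{2n}\}_n$ converges at every Schnorr random by Corollary~\ref{cor:name.convergence}. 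Since the Schnorr randoms are conull, $\phi := \lim_n (f_n-g_n)$ is defined almost everywhere, and Fatou's lemma gives $\norm{\phi}_p \leq \liminf_n \norm{f_n-g_n}_p = 0$, so $\phi = 0$ almost everywhere. Proposition~\ref{prop:same.limit} then applies to $\{f_n\}_n$ and $\{g_n\}_n$ and yields, for every Schnorr random $t_0$, $\lim_{N} S_N(f)(t_0) = \lim_n \tau_{2n}(t_0)$; by Definitions~\ref{def:value.assigned} and~\ref{def:canonical.value} the right-hand side is the canonical value of $f(t_0)$, which completes the argument. I expect the main obstacle to be exactly this a.e.\ (rather than $L^p$, or merely subsequential) comparison of the two sequences, which is why Theorem~\ref{thm:main.1}, Corollary~\ref{cor:name.convergence}, and the conullity of the Schnorr randoms are all needed on top of the measure-theoretic estimate.
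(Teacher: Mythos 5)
Your proposal is correct and follows the route the paper intends: the corollary is stated as an immediate consequence of Proposition~\ref{prop:same.limit}, applied exactly as you do to $\{S_N(f)\}_{N}$ (with the modulus from Lemma~\ref{lm:modulus.ae.conv}) and to $\{\tau_{2n}\}_{n}$ for a computable Cauchy name (with the modulus from Theorem~\ref{thm:eff.RF}). Your Fatou argument identifying $\lim_n\bigl(S_n(f)-\tau_{2n}\bigr)=0$ a.e.\ is a careful, self-contained way to verify the remaining hypothesis (the paper would instead just note that both sequences converge a.e.\ to $f$), and your observation that any computable majorant of the modulus in Theorem~\ref{thm:eff.RF} still works is a correct handling of that minor point.
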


It should also be remarked that these canonical values are similar to Miyabe's Schnorr layerwise computable functions from \cite{miyabe13}.

\section{The $p = 1$ case; characterizing Schnorr randomness via the Fej\'er-Lebesgue Theorem}\label{sec:Fejer.Lebesgue}

% Rewrite this intro a little; statment Lebesgue's result on C-M's  classical theory then effective rendition

Carleson's Theorem does not hold for vectors in $L^1[\pi,\pi]$.  Indeed, Kolmogorov \cite{Kolmogorov:1923} constructed a complex-valued function $f$ in $L^1[\pi,\pi]$ for which $\{S_N(f)(t)\}_{N \in \N}$ diverges almost everywhere (later improved to ``diverges everywhere'').  Moser \cite{Moser:2010} further constructed a computable such $f$. Nonetheless, Fej\'er and Lebesgue  proved that the Ces\'aro means of $\{S_N(f)\}_{N \in \N}$ converge to $f$ almost everywhere. In this section, we will show that the exceptional set of this theorem also characterizes Schnorr randomness.  
We begin by reviewing the relevant components of the classical theory.  We will then discuss 
their effective renditions.   

Recall that the $(N+1)^{st}$ Ces\'aro mean of a sequence $\{a_n\}_{n \in \N}$ is 
\[
\frac{1}{N+1} \sum_{n = 0}^N a_n.
\]
If $\{a_n\}_{n \in \N}$ converges, then so does the sequence of its Ces\'aro means and to the same limit.  Ces\'aro means provide a widely-used method for ``evaluating divergent series;''  e.g., the Ces\'aro means of the partial sums of $\sum_{n = 0}^\infty (-1)^n$ converge to $\frac{1}{2}$.  

Now fix a vector $f \in L^1[-\pi, \pi]$.  Let $\sigma_N(f)$ denote the $(N+1)^{st}$ Ces\'aro 
mean of $\{S_N(f)\}_{N \in \N}$.  That is, 
\[
\sigma_N(f) = \frac{1}{N + 1} \sum_{M = 0}^N S_M(f).
\]  
One can also express $\sigma_N(f)$ via the convolution
\begin{equation}\label{eqn:Fejer}
\sigma_N(f)(t) = \frac{1}{2\pi}\int_{-\pi}^{\pi}f(t-x)F_N(x)\,dx
\end{equation}
where $F_N$ is the \emph{Fej\'er kernel}
\[
F_N(x)= \frac{1}{N} \frac{\sin^2(Nx/2)}{\sin^2(x/2)}.
\]

Recall that $t_0 \in [-\pi, \pi]$ is a Lebesgue point of $f$ if 
\[
\lim_{\epsilon \rightarrow 0^+} \frac{1}{2\epsilon} \int_{t_0 - \epsilon}^{t_0 + \epsilon} |f(t)-f(t_0)|\ dt = 0.
\]
One of Lebesgue's differentiation theorems states that almost every point in $[-\pi, \pi]$ is a Lebesgue point of $f$.   Building on Fej\'er's work on Ces\'aro means of Fourier series, Lebesgue then showed that $\{\sigma_N(f)(t_0)\}_{N \in \N}$ converges to $f(t_0)$ whenever $t_0$ is a Lebesgue point of $f$ \cite{Lebesgue:1905}.  Fej\'er also showed that $\{\sigma_N(f)\}_{N \in \N}$ converges uniformly if $f$ is 
continuous and periodic (in the sense that $f(\pi) = f(-\pi))$.

The result of this section can now be stated as follows. 

\begin{theorem}\label{thm:eff.Fejer.Lebesgue}
Suppose $t_0 \in [-\pi, \pi]$.  Then, $t_0$ is Schnorr random if and only if 
$\{\sigma_N(f)(t_0)\}_{N \in \N}$ converges to the canonical value of $f(t_0)$ whenever $f$ is a computable vector in $L^1[-\pi, \pi]$.
\end{theorem}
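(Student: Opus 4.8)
The plan is to prove the two directions separately, mirroring the structure of Theorems~\ref{thm:main.1} and~\ref{thm:main.2} but working with the Fej\'er means $\sigma_N(f)$ in place of the partial sums $S_N(f)$. For the forward direction, I would first establish that the sequence $\{\sigma_N(f)\}_{N \in \N}$ has a computable modulus of almost-everywhere convergence whenever $f$ is a computable vector in $L^1[-\pi,\pi]$. The classical input here is the maximal inequality for the Fej\'er means: the Hardy--Littlewood maximal function controls $\sup_N |\sigma_N(f)|$ pointwise (since the Fej\'er kernels are dominated by an approximate identity with an $L^1$ radially-decreasing majorant), and the maximal operator is weak-$(1,1)$. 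So $\mu(\{t : \sup_N |\sigma_N(f-\tau)(t)| \geq \lambda\}) \leq C\norm{f-\tau}_1/\lambda$ for rational trigonometric polynomials $\tau$ approximating $f$; this replaces Fefferman's inequality in the proof of Lemma~\ref{lm:modulus.ae.conv}. Since $\sigma_N(\tau) \to \tau$ uniformly (indeed $\sigma_N(\tau) = \tau$ for $N$ past the degree of $\tau$, up to the Ces\'aro smoothing), the same telescoping-and-Chebyshev argument as in Lemma~\ref{lm:modulus.ae.conv} yields a computable modulus of a.e.\ convergence. Applying Lemma~\ref{lm:effective-convergence} then gives convergence of $\{\sigma_N(f)(t_0)\}_{N \in \N}$ at every Schnorr random $t_0$, and Proposition~\ref{prop:same.limit} (comparing $\sigma_N(f)$ with the subsequence $\tau_{2n}$ of a computable Cauchy name, which agree a.e.\ in the limit since both converge to $f$ in $L^1$) identifies the limit as the canonical value of $f(t_0)$.

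For the reverse direction, suppose $t_0$ is not Schnorr random; I must produce a computable vector $f \in L^1[-\pi,\pi]$ whose Fej\'er means fail to converge to the canonical value of $f(t_0)$ at $t_0$. The natural strategy is to force $t_0$ \emph{not} to be a Lebesgue point in an effective, divergence-witnessing way. Take a Schnorr test $\{U_n\}$ with $t_0 \in \bigcap_n U_n$, and build $f$ as a sum of small bumps concentrated near $t_0$ using the interval decompositions of the $U_n$ exactly as in the proof of Theorem~\ref{thm:main.2}: on a tiny interval around $t_0$ inside $U_{2^n}$ place a spike of controlled $L^1$-mass but large height, arranged so that the Fej\'er averages $\sigma_N(f)(t_0)$ (which, via \eqref{eqn:Fejer}, are weighted averages of $f$ near $t_0$ with the Fej\'er kernel's mass concentrated in a window of width $\sim 1/N$) oscillate: for infinitely many $N$ the window captures a spike and $\sigma_N(f)(t_0)$ is large, while for infinitely many $N$ it does not and $\sigma_N(f)(t_0)$ is near $0$. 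The summability $\norm{p_{n,k}}_1 \lesssim 2^{-(n+k+1)}$ (or its analogue for the bumps) guarantees $f$ is a computable vector, and the canonical value of $f(t_0)$ is whatever value $\tau_{2n}(t_0)$ converges to, which one arranges to be, say, $0$ by keeping the spikes off the approximating polynomials' sample behavior at $t_0$ — alternatively one simply shows the Fej\'er means diverge outright, so no value can be the limit.

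The main obstacle I anticipate is the reverse direction: unlike partial sums, Fej\'er means are positive averaging operators, so one cannot exploit the kind of cancellation/Dirichlet-kernel growth used for $S_N$ in Theorem~\ref{thm:main.2}. To make $\{\sigma_N(f)(t_0)\}$ diverge one genuinely needs the \emph{two-sided} failure of the Lebesgue-point condition — the averages must be forced both large (along a subsequence of scales where a spike sits just inside the averaging window) and small (along scales where the window has moved past all spikes) — and this requires carefully choreographing the positions and widths of the spikes against the geometry of the Fej\'er kernel, using the effectively-small measure bounds $\mu(G_{n,k}) < 2^{-2^{n+k}}$ from the Theorem~\ref{thm:main.2} construction to keep everything computable while still getting divergence at the fixed point $t_0 \in \bigcap_n U_n$. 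A secondary technical point is checking that $\sigma_N(\tau)$ stabilizes to $\tau$ (up to an effectively small error) for $N$ large relative to $\deg(\tau)$, which is needed both for the modulus-of-convergence argument and for pinning down the canonical value; this is elementary from the explicit form of $\sigma_N$ but must be stated carefully since $\sigma_N(\tau) \neq \tau$ exactly for finite $N$.
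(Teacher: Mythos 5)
Your forward direction is correct but takes a genuinely different route from the paper. The paper does not effectivize the Fej\'er--Lebesgue theorem at all: it invokes the result of Pathak--Rojas--Simpson and Rute that every Schnorr random is a Lebesgue point of the canonical version $\widetilde{f}$ of a computable $L^1$ vector, and then applies the \emph{classical} Fej\'er--Lebesgue theorem to conclude $\sigma_N(f)(t_0)\to\widetilde{f}(t_0)$. Your plan --- replace Fefferman's inequality by the weak-$(1,1)$ bound for $\sup_N|\sigma_N(\cdot)|$, rerun Lemma~\ref{lm:modulus.ae.conv} to get a computable modulus of almost-everywhere convergence, and finish with Lemma~\ref{lm:effective-convergence} and Proposition~\ref{prop:same.limit} --- is sound and more self-contained, at the price of proving the weak-type maximal estimate and handling the fact that $\sigma_M(\tau)-\sigma_N(\tau)$ is only effectively small (not zero) for $M,N$ large, which you correctly flag. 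Either approach is acceptable here.

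The reverse direction, however, has a genuine gap, and you have put your finger on exactly the difficulty without resolving it. Your spike construction is never carried out, and it is not clear it can be: you must place spikes on \emph{every} interval of every $U_{2^n}$ (you do not know which one contains $t_0$), so spikes accumulate at $t_0$ at every scale, and it is then far from obvious that $\sigma_N(f)(t_0)$ is ever \emph{small} --- the two-sided oscillation you need is precisely what the geometry fights against. The paper's proof turns the positivity you identify as the obstacle into the engine of the argument: since $t_0$ is not Schnorr random, Miyabe's characterization gives a Schnorr integral test $T=\sum_n g_n$ with $T(t_0)=\infty$; one checks $T$ is itself a computable vector in $L^1[-\pi,\pi]$ (computability of $\norm{T}_1$ lets you truncate the sum with controlled $L^1$ error); and then positivity of the Fej\'er kernel gives $\sigma_N(T)(t_0)\geq\sigma_N(h_k)(t_0)$ for each continuous partial sum $h_k=\sum_{n\leq k}g_n$, so $\liminf_N\sigma_N(T)(t_0)\geq h_k(t_0)$ for every $k$ and hence $\sigma_N(T)(t_0)\to\infty$. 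No oscillation, no choreography of spikes against the kernel, and no need to worry about what the canonical value is, since the means diverge outright. You should abandon the Schnorr-test/interval construction for this direction and switch to the integral-test characterization.
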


\begin{proof}
Suppose $t_0$ is Schnorr random, and let $f$ be a computable vector in $L^1[-\pi, \pi]$.   
Let $\widetilde{f}$ denote the function so that $\widetilde{f}(t)$ equals the canonical value of $f(t)$
when $t$ is Schnorr random and is $0$ otherwise.  Call $\widetilde{f}$ the \emph{canonical version} of $f$.  Independently, Pathak, Rojas, and Simpson \cite{Pathak.Rojas.Simpson:2014} and Rute \cite{Rute:2013pd} showed that every Schnorr random $t_0$ is a Lebesgue point of $\widetilde{f}$.  Since $\widetilde{f}(t) = f(t)$ almost everywhere, $\sigma_N(\widetilde{f}) = \sigma_N(f)$.  Thus, by the Fej\'er-Lebesgue Theorem, $\{\sigma_N(f)(t_0)\}_{N \in \N}$ converges to the canonical value of $f(t_0)$.  

Now, suppose $t_0$ is not Schnorr random.  Then there is a Schnorr integral test $T$ so that 
$T(t_0) = \infty$.   
We claim that $T$ is a computable vector in $L^1[-\pi, \pi]$.  Suppose 
$T = \sum_{n = 0}^\infty g_n$ where $\{g_n\}_{n \in \N}$ is a computable sequence of 
nonnegative functions.  By the Monotone Convergence Theorem, 
$\norm{T}_1 = \sum_{n = 0}^\infty \norm{g_n}_1$.  Let $k \in \N$ be given.  Since $\norm{T}_1$ is computable, from $k$ we can compute a nonnegative integer $m$ so that 
$\norm{\sum_{n = m+1}^\infty g_n}_1 < 2^{-(k+1)}$.  Since $g_n$ is computable uniformly 
in $n$, we can then compute a trigonometric polynomial $\tau$ so that 
$\norm{\tau - \sum_{n = 0}^m g_n}_1 < 2^{-(k+1)}$.   It follows that 
$\norm{T - \tau}_1 < 2^{-k}$.

We now show that $\lim_N \sigma_N(T)(t_0) = \infty$.  Set $h_k = \sum_{n = 0}^k g_k$.  
Fix $k \in \N$.  Since $F_N \geq 0$, it follows from 
Equation \ref{eqn:Fejer} that $\sigma_N(T)(t_0) \geq \sigma_N(h_k)(t_0)$.  
Since $h_k$ is continuous at $t_0$, $t_0$ is a Lebesgue point for $h_k$ and so 
$\lim_{N \rightarrow \infty} \sigma_N(h_k)(t_0) = h_k(t_0)$.  
Thus, $\lim_N \sigma_N(T)(t_0) \geq h_k(t_0)$.  It follows that 
$\lim_{N \rightarrow \infty} \sigma_N(f)(t_0) = \infty$.
\end{proof}

The forward direction of Theorem \ref{thm:eff.Fejer.Lebesgue} first appeared in Rute's dissertation \cite[Cor.~4.22 on p.~49]{Rute:2013pd}.  Note that if $f : [-\pi, \pi] \rightarrow \C$ is continuous, then every number in $[-\pi, \pi]$ is a Lebesgue point of $f$.  Thus, the converse of Theorem \ref{thm:eff.Fejer.Lebesgue} cannot be made as strong as Theorem \ref{thm:main.2}.  

The proof of the converse of Theorem \ref{thm:eff.Fejer.Lebesgue} can easily be adapted 
to the case where $f \in L^p[-\pi, \pi]$ and $p \geq 1$.  
In addition, the proof of this direction shows that if $T \geq 0$ is a lower semicontinuous and integrable function (possibly with infinite values) and if $p \geq 1$, then there is a vector $f \in L^p[-\pi, \pi]$ so that 
$\{\sigma_N(f)(t)\}_{N \in \N}$ diverges whenever $T(t) = \infty$.  
If $E$ is a measure zero subset of $[-\pi, \pi]$, then there is a lower semicontinuous and non-negative function $T$ so that $\norm{T}_1 < \infty$ and so that 
$T(t) = \infty$ whenever $t \in E$.  We thus obtain the following extension of a result of 
Katznelson by a simpler proof \cite{Katznelson.1966}.

\begin{theorem}
Suppose $p \geq 1$, and suppose $E$ is a measure $0$ subset of $[-\pi, \pi]$.  
Then there exists $f \in L^p[-\pi, \pi]$ so that 
$\{\sigma_N(f)(t)\}_{N \in \N}$ diverges whenever $t \in E$.
\end{theorem}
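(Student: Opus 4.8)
The statement is essentially the combination of the two observations made in the paragraph immediately preceding it, so the plan is to spell those out. It suffices to produce a lower semicontinuous function $T:[-\pi,\pi]\to[0,\infty]$ with $\norm{T}_1<\infty$ and $T\equiv\infty$ on $E$, and then to read off from $T$ a vector $f\in L^p[-\pi,\pi]$ whose Ces\'aro means diverge wherever $T=\infty$. For the first step I would use outer regularity of Lebesgue measure to pick, for each $k\in\N$, an open set $V_k\supseteq E$ with $\mu(V_k)\leq 2^{-k}$, and set $T=\sum_{k=0}^{\infty}\chi_{V_k}$. Each $\chi_{V_k}$ is lower semicontinuous since $V_k$ is open, so $T$, being the supremum of its partial sums, is lower semicontinuous; by the Monotone Convergence Theorem $\norm{T}_1=\sum_k\mu(V_k)\leq 2$; and every $t\in E$ lies in all the $V_k$, so $T(t)=\infty$.

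For the second step I would set $f=T^{1/p}$. Then $f$ is nonnegative and measurable with $|f|^p=T$, so $\int_{-\pi}^{\pi}|f|^p\,d\mu=\norm{T}_1<\infty$ and hence $f\in L^p[-\pi,\pi]$; since moreover $x^{1/p}\leq 1+x$, we also have $f\in L^1[-\pi,\pi]$, so each $\sigma_N(f)$ is a trigonometric polynomial and $\sigma_N(f)(t_0)$ is a finite real for every $t_0$. Now fix $t_0\in E$. For $k\in\N$ put $h_k=\bigl(\sum_{j=0}^{k}\chi_{V_j}\bigr)^{1/p}$, a bounded (hence $L^1$) function with $0\leq h_k\leq T^{1/p}=f$. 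On the open neighborhood $W_k=\bigcap_{j\leq k}V_j$ of $t_0$ each $\chi_{V_j}$ with $j\leq k$ equals $1$, so $h_k\equiv(k+1)^{1/p}$ on $W_k$; thus $h_k$ is continuous at $t_0$, hence $t_0$ is a Lebesgue point of $h_k$, and the Fej\'er-Lebesgue theorem gives $\sigma_N(h_k)(t_0)\to(k+1)^{1/p}$. Because the Fej\'er kernel $F_N$ is nonnegative and $f\geq h_k$, the convolution formula (\ref{eqn:Fejer}) gives $\sigma_N(f)(t_0)\geq\sigma_N(h_k)(t_0)$ for every $N$, so $\liminf_N\sigma_N(f)(t_0)\geq(k+1)^{1/p}$. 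Since $k$ is arbitrary, $\sigma_N(f)(t_0)\to\infty$, so $\{\sigma_N(f)(t_0)\}_{N\in\N}$ diverges, as required.

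I do not anticipate a serious obstacle: the argument only assembles standard ingredients — outer regularity, the Monotone Convergence Theorem, positivity of $F_N$, and Fej\'er-Lebesgue convergence at Lebesgue points. The points worth a little care are the device $f=T^{1/p}$, which is exactly what upgrades the $p=1$ case (where one may simply take $f=T$, as in the proof of the converse of Theorem~\ref{thm:eff.Fejer.Lebesgue}) to arbitrary $p\geq 1$, and the bookkeeping that forces divergence at \emph{every} point of $E$ rather than merely almost everywhere, which is arranged above by making $t_0$ an interior point of a set on which the continuous lower approximant $h_k$ is constant. The only genuine technicality is the behavior at $t_0=\pm\pi$; this disappears once one works on the circle $\R/2\pi\Z$ (the natural setting for $\sigma_N$) and chooses the $V_k$ to be open neighborhoods of $E$ there, so that the $h_k$ are again locally constant near such $t_0$.
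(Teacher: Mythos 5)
Your proposal is correct and follows essentially the same route the paper sketches in the paragraph preceding the theorem: construct a nonnegative, integrable, lower semicontinuous $T=\sum_k\chi_{V_k}$ that is infinite on $E$, and then rerun the divergence argument from the converse direction of Theorem \ref{thm:eff.Fejer.Lebesgue}, comparing $\sigma_N(f)(t_0)$ with the Ces\`aro means of continuous (locally constant) minorants via the positivity of the Fej\'er kernel. Your device $f=T^{1/p}$ is precisely the natural way to carry out the adaptation to general $p\geq 1$ that the paper leaves implicit, so the write-up simply supplies the details of the paper's own sketch.
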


\section{Conclusion}\label{sec:conclusion}

We have used algorithmic randomness to study an almost-everywhere convergence theorem in analysis.   Many of these theorems have already been investigated, including the ergodic theorem \cite{ft-mp,ghr11,Hoyrup.Rojas:2009b,v97}, the martingale convergence theorem \cite{Rute:2013pd}, the Lebesgue Differentiation Theorem \cite{Pathak.Rojas.Simpson:2014,Rute:2013pd}, Rademacher's Theorem \cite{Freer.Kjos-Hanssen.Nies.ea:2014}, and Lebesgue's theorem concerning the differentiability of bounded variation functions \cite{bmn16}.  This list is not exhaustive and more work needs to be done.  In some cases, the resulting randomness notion is Schnorr randomness.  In others, it is Martin-L\"of randomness or computable randomness.  

In this conclusion, we would like to share some intuition about why Carleson's Theorem characterizes Schnorr randomness and what clues one might look for when investigating similar theorems.  Namely, we are interested in almost-everywhere convergence theorems stating that for a family $\mathcal{F}$ of sequences of functions, every sequence $\{f_n\}_{n \in \N}$ in the family converges almost everywhere.  In Carleson's Theorem, $\mathcal{F}$ is the family of sequences $\{S_N(f)\}_{N \in \N}$ for $f \in L^p$.

The main clue that $\{S_N(f)\}_{N \in \N}$ converges on Schnorr randoms, is that the pointwise limit of this sequence is computable from the parameter $f$ (indeed the limit is $f$).  In such cases where the limit is computable, one can usually (at least from our experience) find a computable modulus of almost-everywhere convergence.  This allows one to apply Lemma~\ref{lm:effective-convergence} or one of its generalizations to show that the sequence $\{f_n\}_{n \in \N}$ converges for Schnorr randoms (e.g. Theorem~\ref{thm:main.1}).  In some cases, this rate of convergence follows from well-known quantitative estimates---Fefferman's Inequality in our case.  Moreover, in convergence theorems where the limit is computable, these theorems are usually constructively provable.  We conjecture that Carleson's Theorem is provable in the logical frameworks of Bishop style constructivism and $\mathsf{RCA}_0$.

On the other hand, if we are working with a theorem, such as the ergodic theorem, where the limit of the theorem is not always computable, then it is unlikely that the sequence $\{f_n\}_{n \in \N}$ converges for all Schnorr randoms. Instead, one should look into weaker randomness notions, such as Martin-L\"of and computable randomness.  Nonetheless, convergence on Schnorr randoms can often be recovered by restricting the theorem. For example, with the ergodic theorem, convergence happens on Schnorr randoms if the system is ergodic (or in any case where the limit is computable).

Lastly, ``reversals'' similar to Theorem~\ref{thm:main.2} are usually effective proofs of a stronger result.  For example, Miyabe's characterization of the Schnorr randoms yields proof of the following principle: If $E \subseteq [-\pi, \pi]$ is a null set, then there is a lower semicontinuous and integrable function $T : [-\pi, \pi] \rightarrow [0, \infty]$ so that $T(t_0) = \infty$ whenever $t_0 \in E$.  If we relativize Theorem~\ref{thm:main.2}, then we get Kahane and Katznelson's result \cite{Kahane.Katznelson.1966} that for every null set $E$, there is a continuous function $f$ such that $\{S_N(f)\}_{N \in \N}$ diverges on $E$.  However, the relativizations of the lemmas in Section \ref{sec:2ndproof} strengthen the intermediate results in \cite{Kahane.Katznelson.1966}, and we have endeavored to carefully justify many important details.  Similarly, if an almost-everywhere convergence theorem characterizes a standard randomness notion, then it usually satisfies the following property: For every null set $E$ there is a sequence $\{f_n\}_{n \in \N}$ for which the theorem says $\{f_n\}_{n \in \N}$ converges almost everywhere, but  $\{f_n\}_{n \in \N}$ diverges on $E$.  Not all almost-everywhere theorems satisfy this property.  Nonetheless, this property does seem to be satisfied by theorems where the parameters of the theorem are functions in $L^p$, such as Carleson's Theorem and the Lebesgue differentiation theorem.

\bibliographystyle{amsplain}
\def\cprime{$'$}
\providecommand{\bysame}{\leavevmode\hbox to3em{\hrulefill}\thinspace}
\providecommand{\MR}{\relax\ifhmode\unskip\space\fi MR }
% \MRhref is called by the amsart/book/proc definition of \MR.
\providecommand{\MRhref}[2]{%
  \href{http://www.ams.org/mathscinet-getitem?mr=#1}{#2}
}
\providecommand{\href}[2]{#2}

\end{document}